\newcommand{\bed}{\begin{displaymath}}
\newcommand{\eed}{\end{displaymath}}
\newcommand{\bea}{\bed\begin{array}{rl}}
\newcommand{\eea}{\end{array}\eed}
\newcommand{\barray}{\begin{array}{ll}}
\newcommand{\earray}{\end{array}}
\newtheorem{theorem}{Theorem}[section]
\newtheorem{lemma}[theorem]{Lemma}
\newtheorem{corollary}[theorem]{Corollary}
\newtheorem{definition}{Definition}[section]
\begin{document}

\title{ Existence and stability of almost periodic solutions to impulsive stochastic differential evolution equations with infinite delay  }
\author{Shufen Zhao$^{a,b}$\thanks{Corresponding author. zsfzx1982@sina.com, 12b312003@hit.edu.cn. This work is supported by the NSF of P.R. China (No.11071050)}
,~~Minghui Song$^{a}$
\\$a$ Department of Mathematics, Harbin Institute of Technology, Harbin 150001, PR China\\
$b$ Department of  Mathematics, Zhaotong University, Zhaotong 657000, PR China}
\maketitle
\begin{abstract}
In this paper, we investigate a class of nonlinear impulsive stochastic differential evolution equations with infinite delay in Banach space.  Based on the Krasnoselskii's fixed point theorem, sufficient conditions of the existence of the square mean piecewise almost periodic solutions to this type of equations are derived. Moreover, the exponential stability of the square mean piecewise almost periodic solution is investigated.

{\bf Keywords}
Square mean piecewise almost periodic solution; impulsive stochastic differential equation; infinite delay; the Krasnoselskii's fixed point theorem; exponential stability. \\
{\bf Mathematics Subject Classfication} 35B15; 34F05; 60H15.

\end{abstract}
\pagestyle{fancy}
\fancyhf{}
%\fancyhead[RE]{The performance of new grad tes}
\fancyhead[CO]{ Almost periodic solution to impulsive stochastic differential equations with delay}
\fancyhead[LE,RO]{\thepage}
\section{Introduction}
\setcounter{equation}{0}
\noindent
Stochastic modeling has come to play an important role in describing the phenomena which are influenced by some random factors. In the past decades, the qualitative properties such as existence, uniqueness and stability for stochastic differential systems have been investigated in \cite{0,1,2,3,4,5,6} and the references therein. The concept of quadratic mean almost periodicity was introduced in \cite{7}. In \cite{7,8}, Bezandry and Diagana investigated the existence and uniqueness of a quadratic mean almost periodic solution to the stochastic evolution equations and a non-autonomous semi-linear stochastic differential equation, respectively.
It is well known that impulsive phenomena are frequently encountered in a variety of dynamic systems such as nuclear reactors, chemical engineering systems, biological systems and population dynamic models.
In \cite{9},
 Liu and Zhang introduced the concept of square mean piecewise almost periodic for impulsive stochastic process, they proved the existence and uniqueness of the square mean piecewise almost periodic solution by using the theory of the semigroups of the operators and the Schauder fixed point theorem. Meanwhile, they discussed the exponential stability of mild solution.

Li, Liu and Luo investigated the existence and uniqueness of quadratic mean almost periodic mild solutions for a class of stochastic functional evolution equations with infinite delays with the aid of semigroups of operators and the fixed point method of contracting mapping in \cite{10}.

Motivated by the above works, we consider the existence and stability of the square mean piecewise almost periodic solution of the following stochastic impulsive differential equation.
% It is well known that the impulsive  phenomenon arise from many different real processes and phenomena which appeared in physics, chemical technology, dynamics, biotechnology and so on.
%So in this paper, we consider the following stochastic impulsive differential evolution equations with initial values.
\begin{eqnarray}\label{eq1}
\left\{ \begin{array}{ll}
\mathrm{d}[x(t)-G(t,x_{t},x_{t})]=[Ax(t)+f(t,x_{t},x_{t})]\mathrm{d}t+g(t,x_{t},x_{t})\mathrm{d}w(t),\, t\in \mathbf{R},\, t\neq t_{i},\\
\triangle x(t_{i})=I_{i}(x(t_{i})),\,i=1,2,\ldots,\\
x_{\sigma}=\varphi\in\mathcal{B}.
\end{array} \right.
\end{eqnarray}
where $A:D(A)\subset H\rightarrow H$ is the infinitesimal generator of a sectorial operator. $G,\,f:\mathbf{R}\times H\times\mathcal{B}\rightarrow H$ and  $g:\mathbf{R}\times H\times\mathcal{B}\rightarrow \mathcal{L}(G,H)$  are appropriate mappings. Here $\mathcal{B}$ is an abstract phase space to be defined later. The history $x_{t}:(-\infty,0]\rightarrow H,\, x_{t}(s)=x(t+s),s\leq0$ belongs to the abstract phase space $\mathcal{B}$.
Moreover, we denotes $\triangle x(t_{i})=x(t_{i}^{+})-x(t_{i}^{-}).$
Let $x(t_{i}^{+})$ and $x(t_{i}^{-})$ represent the right and the left limits of $x(t)$ at $t=t_{i},$ respectively.

By using the Krasnoselskii-Schaefer fixed point theorem (see \cite{11}) and the theory of the semigroup of the operators (see \cite{12}), we get the existence and uniqueness of the square mean piecewise almost periodic solution. Moreover, by using the method appearing in \cite{9}, we obtain the exponential stability of the square mean piecewise almost periodic solution of (\ref{eq1}). The problem (1.1) reduced to the abstract form as
in \cite{10} when $\triangle x(t_{i})=0$ ( the impulsive  phenomenon does not exist). However, the condition that ensure the existence of the square mean piecewise almost periodic solution is different. We are not aware of any results in the literature that solves problem (\ref{eq1}), this is the first time to consider the existence and the uniqueness of the square mean piecewise almost periodic solution for (\ref{eq1}).

The remainder of this paper is organized as follows. In Section 2, we give some preliminaries which are used in this paper. In Section 3,  we give some criteria to ensure the existence and the uniqueness of the square mean piecewise almost periodic mild solution. In Section 4, we discuss the exponential stability of the square mean piecewise almost periodic mild solution.
\section{Preliminaries}
Let $(\Omega,\mathcal{F},P)$ be a complete probability space equipped with some filtration $\{\mathcal{F}_{t}\}_{t\geq0}$ satisfying the usual conditions, i.e., the filtration is right continuous and increasing while $\mathcal{F}_{0}$ contains all $P$-null sets. $H$, $G$ be two real separable Hilbert spaces. $<\cdot,\cdot>_{H}$,  $<\cdot,\cdot>_{G}$  denote the inner products on $H$ and $G$, respectively. And $|\cdot|_{H}$, $|\cdot|_{G}$  are vector norms on $H$, $G$. Let $\mathcal{L}(G,H)$ be the collection of all inner bounded operators from $G$ into $H$, with the usual operator norm $\|\cdot\|$.
The symbol $\{w(t),t\geq 0\}$ is a $G$ valued $\{\mathcal{F}_{t}\}_{t\geq0}$ Wiener process defined on the probability space $(\Omega,\mathcal{F},P)$ with covariance operator $Q$, i.e.
\begin{equation*}
    \mathbf{E}<w(t),x>_{G}<w(s),y>_{G}=(t\wedge s)<Qx,y>_{G},\:\forall x,y\in G
\end{equation*}
where $Q$ is a positive, self-adjoint and trace class operator on $K$. In particular, we regard $\{w(t),t\geq 0\}$ as a $G$ valued $Q$ wiener process related to $\{\mathcal{F}_{t}\}_{t\geq0}$ (see \cite{11,2}), and $w(t)$ is defined as
\begin{equation*}
    w(t)=\sum_{n=1}^{\infty}\sqrt{\lambda_{n}}\beta_{n}(t)e_{n},\,t\geq0,
\end{equation*}
where $\beta_{n}(t)\,(n=1,2,3,\ldots)$ is a sequence of real valued standard Brownian  motions mutually independent on the probability space $(\Omega,\mathcal{F},P)$, let $\lambda_{n}$ $(n\in \mathbf{N})$ are the eigenvalues of $Q$ and $e_{n}$ $(n\in \mathbf{N})$ are the eigenvectors of  $\lambda_{n}$ corresponding to $\lambda_{n}$. That is
\begin{equation*}
    Qe_{n}=\lambda_{n}e_{n},\,n=1,2,3,\ldots.
\end{equation*}
In order to define stochastic integrals with respect to the $Q$ wiener process $w(t)$, we introduce the subspace $G_{0}=Q^{1/2}(G)$ of $G$ which, endowed with the inner product,
\begin{equation*}
    <u,v>_{G_{0}}=<Q^{1/2}u,Q^{1/2}v>_{G}.
\end{equation*}
 is a Hilbert space. Let $\mathcal{L}_{2}^{0}=\mathcal{L}_{2}(G_{0},H)$ denotes the collection of all Hilbert Schmidt operators from $G_{0}$ into $H$. It turns out to be a separable Hilbert space equipped with the norm
\begin{equation*}
\|\psi\|^{2}_{\mathcal{L}_{2}^{0}}=tr\big((\psi Q^{1/2})(\psi Q^{1/2})^{*}\big), \quad\forall\,\psi\in \mathcal{L}_{2}^{0}.
\end{equation*}
Clearly, for any bounded operator $\psi\in \mathcal{L}(G,H)$, this norm reduces to $\|\psi\|^{2}_{\mathcal{L}_{2}^{0}}=tr(\psi Q\psi^{*})$.

Let $\Phi:(0,\infty)\rightarrow\mathcal{L}_{2}^{0}$ be a predictable and $\mathcal{F}_{t}$ adapted process such that
\begin{equation*}
    \int_{0}^{t}\mathbf{E}\|\Phi(s)\|^{2}_{\mathcal{L}_{2}^{0}}\mathrm{d}s<\infty,\, \forall t>0.
\end{equation*}
Then we can define the $H$ valued stochastic integral
\begin{equation*}
    \int_{0}^{t}\Phi(s)\mathrm{d}w(s),
\end{equation*}
which is a continuous square-integrable martingale (\cite{111}).
%\section{Preliminaries}
%\setcounter{equation}{0}
%\noindent
%In the following we would like $\mathbf{R}$ denotes the set of real numbers, $\mathbf{R}^{+}$ denotes the set of nonnegative real numbers, $\mathbf{Z}$ denotes the set of integers, $\mathbf{Z}^{+}$ denotes the set of nonnegative integers.
The collection of all strongly measurable, square-integrable and $H$-valued random variables satisfying $\int_{\Omega}\|x\|\mathrm{d}P<\infty$, which is denoted by $L_{2}(P,H),$ is a Banach space equipped with norm $\|x(\cdot)\|_{L_{2}(P, H)}=(\mathbf{E}\|x(\cdot)\|^{2}_{H})^{1/2}$ . In the following, we assume $g:\mathbf{R}\times L_{2}(P, H)\times\mathcal{B}\rightarrow L_{2}(P,\mathcal{L}_{2}^{0})$ in (\ref{eq1}).

\begin{definition}\cite{7}
A stochastic process $x : \mathbf{R}\rightarrow L_{2}(P,H)$ is said to be stochastically bounded if there exists $M>0$ such that  $\|x(t)\|_{L_{2}(P, H)}\leq M$ for all $t\in \mathbf{R}.$
\end{definition}
\begin{definition}\cite{7}
A stochastic process $x : \mathbf{R}\rightarrow L_{2}(P,H)$ is said to be stochastically continuous in $s\in\mathbf{R}$ if $\lim_{t\rightarrow s}\|x(t)-x(s)\|_{L_{2}(P, H)}^{2}=0.$
\end{definition}

Let $T$ be the set consisting of all real sequence $\{t_{i}\}_{i\in\mathbf{Z}}$ such that $\gamma=\inf_{ \in\mathbf{Z}}(t_{i+1}-t_{i})>0,$ $t_{0}=0$ and $\lim_{i\rightarrow\infty}t_{i}=\infty.$  For  $\{t_{i}\}_{i\in\mathbf{Z}}\in T,$ let $PC(\mathbf{R},L_{2}(P,H))$  be the collection of all stochastically bounded functions $\phi:\mathbf{R}\rightarrow L_{2}(P,H),$  $\phi(\cdot)$ is stochastically continuous at $t$ for any $t\in(t_{i},t_{i+1})$ and $\phi(t_{i})=\phi(t_{i}^{-})$ for all $i\in\mathbf{Z}$; let $PC(\mathbf{R}\times L_{2}(P,H)\times\mathcal{B},L_{2}(P,H))$ be the collection of all stochastic processes $\phi:\mathbf{R}\times L_{2}(P,H)\times\mathcal{B} \rightarrow L_{2}(P,H)$ such that for any $(x, \tilde{x})\in L_{2}(P,H)\times\mathcal{B},$ $\phi(\cdot,x,\tilde{x})$ is stochastic continuous at $t$ for any $t\in(t_{i}, t_{i+1})$ and $\phi(t_{i},x,\tilde{x})=\phi(t_{i}^{-},x,\tilde{x})$ for all $i\in\mathbf{Z}.$ $\phi(t,\cdot,\cdot)$ is stochastically continuous at $(x,\tilde{x})\in L_{2}(P,H)\times\mathcal{B}.$
%Meanwhile for any $t\in\mathbf{R}^{+},$ $\phi(t,\cdot,\cdot)$ is stochastically continuous at $(x, \tilde{x})\in L_{2}(\Omega,H)\times\mathcal{B}.$

%Next, we present an axiomatic definition of the phase space $\mathcal{B}$ introduced in \cite{Hale91} and \cite{Ren12}, where the axioms of the space $\mathcal{B}$ are established for $\mathcal{F}_{0}$-measurable functions from $(-\infty,0]$ into $H$, with a norm $\|\cdot\|_{\mathcal{B}}$ which satisfying
%\begin{enumerate}
%  \item [(H1)] If $x:(-\infty,b]\rightarrow H$, $b>0$ is such that $x_{0}\in\mathcal{B}$, then, for every $t\in J $, the following conditions hold:
%      \begin{enumerate}
%       \item [(1)] $x_{t}\in \mathcal{B},$
%        \item [(2)] $|x(t)|\leq L\|x_{t}\|_{\mathcal{B}},$
%        \item [(3)] $\|x_{t}\|_{\mathcal{B}}\leq \Gamma(t)\sup_{0\leq s\leq t}|x(s)|+N(t)\|x_{0}\|_{\mathcal{B}},$
%        \end{enumerate}
%        where $L>0$ is a constant; $\Gamma$, $N:[0,+\infty)\rightarrow[1,+\infty)$ are mappings. $\Gamma$ is continuous and $N$ is locally bounded. $L$, $\Gamma$, $N$ are independent on $x(\cdot)$.
%  \item  [(H2)] The space $\mathcal{B}$ is complete.
%\end{enumerate}
In this paper, we assume that the phase space $\mathcal{B}$ is a linear space formed by functions mapping $(-\infty,0]$ into $L_{2}(P,H)$ which
are $\mathcal{F}_{0}$-measurable functions, with a norm $\|\cdot\|_{\mathcal{B}}$ satisfying
 \begin{enumerate}
       \item [(1)] If $x\in PC(\mathbf{R},L_{2}(P,H))$, then, for every $t\in \mathbf{R} $, $x_{t}\in \mathcal{B},$
        \item [(2)]  $\|x_{t}\|_{\mathcal{B}}=\sup_{s\leq t}\|x(s)\|_{L_{2}(P,H)},$
        \item [(3)] The space $\mathcal{B}$ is complete. If $\{\phi^{n}\}_{n\in\mathbf{N}}\subset\mathcal{B}$ is a uniformly bounded sequence in $PC((-\infty,0],L_{2}(P,H))$ formed by functions with compact support and $\phi^{n}\rightarrow\phi$ in the compact open topology, then $\phi\in \mathcal{B}$ and $\|\phi^{n}-\phi\|_\mathcal{B}\rightarrow 0,$ as $n\rightarrow\infty.$
        \end{enumerate}
\begin{definition}
A number $\tau $ is called a $\epsilon$-translation number of the function $\phi\in PC(\mathbf{R},L_{2}(P,H))$ if $ \|\phi(t+\tau)-\phi (t)\|_{L_{2}(\Omega,H)}<\epsilon$ for all $t\in \mathbf{R}$ satisfying the condition $|t-t_{i}|>\epsilon,$ $i\in\mathbf{Z}.$ Let $T(\phi,\epsilon)$ be the set of all $\epsilon$-translation numbers of $\phi.$
\end{definition}
%\begin{remark} For convenience, we replace condition (3) in (H1) by
%\begin{equation*}
%(3') \,\|x_{t}\|_{\mathcal{B}}\leq \sup_{0\leq s\leq t}\|x(s)\|_{H}+N\|x_{0}\|_{\mathcal{B}}, \,\text{where} \,N=\sup_{0\leq s\leq t}N(s).
%\end{equation*}
%\end{remark}

%Then we have the following useful lemma \cite{Ren12}.
%\begin{lemma}\label{lem}
%Let $x:(-\infty,b]\rightarrow H$ be an $\mathcal{F}_{t}$ adapted measurable process such that the $\mathcal{F}_{0}$ adapted process $x_{0}=\varphi\in L_{2}(\Omega,\mathcal{B})$, then
%\begin{equation}
%    \mathbf{E}\|x_{s}\|_{\mathcal{B}}\leq N_{b}\mathbf{E}\|\varphi\|_{\mathcal{B}}+\Gamma_{b}\mathbf{E}(\sup_{0\leq s\leq b}\|x(s)\|),
%\end{equation}
%where $N_{b}=\sup_{t\in J}\{N(t)\}$ and $\Gamma_{b}=\sup_{t\in J }\{\Gamma(t)\}.$
%\end{lemma}
\begin{definition}
A function $\phi\in PC(\mathbf{R},L_{2}(P,H))$ is said to be square mean piecewise almost periodic if the following conditions are fulfilled.
\begin{enumerate}
\item [(1)] $\{t_{i}^{j}=t_{i+j}-t_{i}\}, $ $j\in\mathbf{Z},$ is equipotentialy almost periodic, that is , for any $\epsilon>0,$ there exists a relative dense set $Q_{\epsilon}$ of $\mathbf{R}$ such that for each $\tau\in Q_{\epsilon} $ there is an integer $q\in \mathbf{Z}$ such that $|t_{i+q}-t_{i}-\tau|<\epsilon$ for all $i\in\mathbf{Z}.$
  \item  [(2)] For any $\epsilon>0,$ there exists a positive number $\delta=\delta(\epsilon)$ such that if the points $t$ and $t'$ belong to a same interval of continuity of $\phi$ and $|t-t'|<\delta,$ then $\|\phi(t)-\phi(t')\|_{L_{2}(P,H)}^{2}<\epsilon.$
  \item [(3)] For every $\epsilon>0,$ $T(\phi,\epsilon)$ is a relatively dense set in $\mathbf{R}.$
\end{enumerate}
\end{definition}

We denote by $AP_{T}(\mathbf{R},L_{2}(P,H))$ the collection of all continuous and uniformly bounded  square-mean piecewise almost periodic processes, it  is a Banach space with the norm $\|x\|_{\infty}=\sup_{t\in\mathbf{R}}\|x(t)\|_{L_{2}(P,H)}=\sup_{t\in\mathbf{R}}(\mathbf{E}\|x(t)\|_{H}^{2})^{1/2}.$
%for all $x\in AP_{T}(\mathbf{R}^{+},L_{2}(P,H)). $

\begin{lemma} \cite{15}\label{lem}
Let $f\in AP_{T}(\mathbf{R},L_{2}(P,H)),$ then, $R(f)$, the range of $f$ is a relatively compact set of $L_{2}(P,H).$
\end{lemma}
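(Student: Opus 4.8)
The plan is to prove that $R(f)$ is \emph{totally bounded}; since $L_{2}(P,H)$ is complete, this already gives relative compactness. So I fix $\epsilon>0$ and produce a finite $O(\epsilon)$-net for $R(f)$. Three ingredients will be used, all drawn from the definition of a square mean piecewise almost periodic function: from condition (2), a modulus $\delta=\delta(\epsilon)>0$ (which I may take $<\gamma=\inf_{i}(t_{i+1}-t_{i})$) controlling the oscillation of $f$ on each interval of continuity; from condition (3), the relative density of $T(f,\rho)$ for a small $\rho\in(0,\delta/4)$ fixed below; and from condition (1), the relative density of $Q_{\rho}$. A preliminary observation, standard in the theory of piecewise almost periodic functions, is that $T(f,\rho)\cap Q_{\rho}$ still contains a relatively dense set, so there is a length $\ell$ such that every interval of length $\ell$ contains some $\tau$ which is at once a $\rho$-translation number of $f$ and carries the impulse sequence into itself: $|t_{i+q}-t_{i}-\tau|<\rho$ for all $i$ and a suitable $q=q(\tau)\in\mathbf{Z}$.

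Next I would fix a bounded window $J=[-L,L]$ with $L$ large compared with $\ell$ and $\delta$. On $J$ there are only finitely many of the $t_{i}$, and $f$ is uniformly continuous on each subinterval of continuity by condition (2); hence $f$ extends continuously to the closed pieces, $\overline{f(J)}$ is compact in $L_{2}(P,H)$, and I may fix a finite $\epsilon$-net $\{y_{1},\dots,y_{m}\}$ for it. For an arbitrary $t\in\mathbf{R}$ lying at distance at least $\delta/4$ from every $t_{i}$, I choose $\tau\in T(f,\rho)$ with $t+\tau\in J$; since $t$ avoids the exceptional $\rho$-neighbourhoods of the $t_{i}$, the translation estimate applies and $\|f(t)-f(t+\tau)\|_{L_{2}(P,H)}<\rho$, while $f(t+\tau)$ lies within $\epsilon$ of some $y_{k}$, so $f(t)$ lies within $2\epsilon$ of the net.

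The genuinely delicate case, and what I expect to be the main obstacle, is $t$ lying within $\delta/4$ of some impulse point $t_{i}$: the definition of a $\rho$-translation number explicitly excludes such $t$, and $f$ may jump at $t_{i}$. My plan here is to translate not $t$ but the auxiliary point $s=t_{i}-\delta/2$ (or $s=t_{i}+\delta/2$, according to whether $t\le t_{i}$ or $t>t_{i}$), which, thanks to $\rho<\delta/4<\delta/2$ and $\delta<\gamma$, sits at distance greater than $\rho$ from every impulse point. Choosing $\tau$ in the common relatively dense set with $s+\tau\in J$, the translation estimate gives $\|f(s)-f(s+\tau)\|_{L_{2}(P,H)}<\rho$; moreover $s+\tau$ is within $\rho$ of $t_{i+q}\mp\delta/2$, which again lies (after a harmless enlargement of $J$) inside the window and away from all impulses. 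Two applications of condition (2) then move $f(t)$ to $f(s)$ and $f(s+\tau)$ to $f(t_{i+q}\mp\delta/2)$, and the latter lies in $\overline{f(J)}$, hence within $\epsilon$ of the net; altogether $f(t)$ is within a fixed multiple of $\sqrt{\epsilon}$ of $\{y_{1},\dots,y_{m}\}$. Combining the two cases, $R(f)$ has a finite $O(\sqrt{\epsilon})$-net for every $\epsilon>0$, so it is totally bounded, and therefore relatively compact. The routine-but-fiddly points — the relative density of $T(f,\rho)\cap Q_{\rho}$, the choice of constants so that $s$ and $t_{i+q}\mp\delta/2$ both dodge the impulses while staying within the moduli of condition (2), and the size of the enlargement of $J$ — are the only places where care is needed; the generic-point estimate is the classical Bochner-type argument for almost periodic functions.
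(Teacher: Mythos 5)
The paper gives no proof of this lemma at all --- it is imported wholesale from reference [15] --- so there is no internal argument to compare yours against; judged on its own terms, your proof is essentially correct. It is the classical Bochner total-boundedness argument adapted to the piecewise setting: translate an arbitrary point into a fixed compact window $J$ by a $\rho$-translation number, note that $\overline{f(J)}$ is compact because $J$ meets only finitely many $t_{i}$ (here $\gamma>0$ is essential) and $f$ is uniformly continuous on each continuity interval by condition (2), and treat the $\rho$-neighbourhoods of the impulse times, where the translation estimate is forbidden, by first sliding to the auxiliary point $s=t_{i}\mp\delta/2$ inside the same continuity interval. Your one external ingredient, the relative density of common approximate translation numbers of $f$ and of the sequence $\{t_{i}\}$, is exactly item (iii) of the paper's Lemma 2.8, so it is legitimately available --- but in fact you do not need it: once $\tau\in T(f,\rho)$ is chosen with $s+\tau\in J$, the point $f(s+\tau)$ already lies in $f(J)$ and hence within $\epsilon$ of your net, so the entire detour through $t_{i+q}\mp\delta/2$, and with it the intersection with $Q_{\rho}$, can be deleted; the argument only ever applies the translation estimate at source points $t$ or $s$ that avoid the impulses, and never needs to know where the translated point sits relative to them. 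Two bookkeeping points you already flagged correctly: take $\delta<\gamma$ so that $s$ and $t$ genuinely share a continuity interval, and keep track of the fact that condition (2) bounds $\|f(t)-f(t')\|_{L_{2}(P,H)}^{2}$ by $\epsilon$ while condition (3) bounds the unsquared norm, which is why your final net radius is $O(\sqrt{\epsilon})$ --- harmless for total boundedness.
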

\begin{definition}
A function $f(t,x,\tilde{x})\in PC(\mathbf{R}\times L_{2}(P,H)\times\mathcal{B},L_{2}(P,H))$ is said to be square-mean piecewise almost periodic in $t \in \mathbf{R}$ and uniform on compact subset of $L_{2}(P,H)\times\mathcal{B}$ if for every $\epsilon>0$ and every compact subset $K\subseteq L_{2}(P,H)\times\mathcal{B},$ there exists a relatively dense subset $\Omega(\epsilon)$ of $R$ such that
\begin{equation*}
\|f(t+\tau,x,\tilde{x})-f(t,x,\tilde{x})\|_{L_{2}(P,H)}<\epsilon,
\end{equation*}
for all $(x,\tilde{x})\in K,\,\tau\in \Omega(\epsilon),\,t\in\mathbf{R}$ satisfying $|t-t_{i}|>\epsilon.$ The collection of all such processes is denoted by $AP_{T}(\mathbf{R}\times L_{2}(P,H)\times\mathcal{B},L_{2}(P,H)).$
\end{definition}

%We denote by $(\mathbf{B},\|\cdot\|_{\mathbf{B}})$ a Banach space with its norm $\|\cdot\|_{\mathbf{B}}.$
\begin{lemma}
Suppose that $f(t,x,\tilde{x})\in AP_{T}(\mathbf{R}\times L_{2}(P,H)\times\mathcal{B},L_{2}(P,H))$ and $f(t,\cdot,\cdot)$ is uniformly continuous on each compact subset $K\subseteq L_{2}(P,H)$ uniformly for $t\in \mathbf{R}.$ Namely, for all $\epsilon>0,$ there exists $\delta'>0  $ such that when $(x,\tilde{x}),(y,\tilde{y})\in K$ and $\|x-y\|_{L_{2}(P,H)}+\|\tilde{x}-\tilde{y}\|_\mathcal{B}<\delta'$ implies that $\|f(t,x,\tilde{x})-f(t,y,\tilde{y})\|_{L_{2}(P,H)}<\epsilon$ for all $t\in \mathbf{R}.$ Then the function $t\rightarrow f(t,x(t),x_{t})\in AP_{T}(\mathbf{R},L_{2}(P,H))$ for any $x\in AP_{T}(\mathbf{R},L_{2}(P,H)).$
\end{lemma}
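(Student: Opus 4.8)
\noindent The strategy is to verify that $F(t):=f(t,x(t),x_{t})$ satisfies the three requirements in the definition of a square-mean piecewise almost periodic process. Preliminarily, since $x\in AP_{T}(\mathbf{R},L_{2}(P,H))$ is stochastically bounded and piecewise stochastically continuous and $f\in PC(\mathbf{R}\times L_{2}(P,H)\times\mathcal{B},L_{2}(P,H))$, one checks directly that $F\in PC(\mathbf{R},L_{2}(P,H))$. By \lemref{lem} the range $R(x)$ is relatively compact in $L_{2}(P,H)$; put $K_{1}:=\overline{R(x)}$ and let $K:=K_{1}\times K_{2}$ be a compact subset of $L_{2}(P,H)\times\mathcal{B}$ containing every pair $(x(t),x_{t})$, the relative compactness of $\{x_{t}:t\in\mathbf{R}\}$ being inherited from that of $R(x)$ through property (2) of the phase space. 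The hypotheses then make $f$ bounded and uniformly continuous in $(u,v)\in K$ uniformly in $t$, whence $F$ is uniformly bounded.

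\noindent Property (1), the equipotential almost periodicity of $\{t_{i}^{j}\}$, is unaffected by the composition: it concerns only the fixed sequence $\{t_{i}\}_{i\in\mathbf{Z}}$ and is taken verbatim from the assumption $f\in AP_{T}(\mathbf{R}\times L_{2}(P,H)\times\mathcal{B},L_{2}(P,H))$. For property (2), fix $\epsilon>0$, take $\delta'>0$ from the uniform continuity of $f(t,\cdot,\cdot)$ on $K$ at level $\sqrt{\epsilon}/2$, and for $t,t'$ in one interval of continuity with $|t-t'|$ small decompose
\begin{equation*}
F(t)-F(t')=\big[f(t,x(t),x_{t})-f(t',x(t),x_{t})\big]+\big[f(t',x(t),x_{t})-f(t',x(t'),x_{t'})\big].
\end{equation*}
The first bracket is small by the uniform continuity of $f$ in $t$ on intervals of continuity (uniformly for arguments in $K$); the second is small once $\|x(t)-x(t')\|_{L_{2}(P,H)}+\|x_{t}-x_{t'}\|_{\mathcal{B}}<\delta'$, which is ensured by property (2) for $x$ together with the corresponding control of the history term. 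Then $\|a+b\|^{2}\le 2\|a\|^{2}+2\|b\|^{2}$ gives $\|F(t)-F(t')\|_{L_{2}(P,H)}^{2}<\epsilon$.

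\noindent The core of the argument is property (3): for every $\epsilon>0$, $T(F,\epsilon)$ is relatively dense in $\mathbf{R}$. Choose $\delta'>0$ from the uniform continuity of $f$ on $K$ at level $\sqrt{\epsilon}/2$, and for $t$ with $|t-t_{i}|>\epsilon$ for all $i$ write
\begin{equation*}
F(t+\tau)-F(t)=\big[f(t+\tau,x(t+\tau),x_{t+\tau})-f(t+\tau,x(t),x_{t})\big]+\big[f(t+\tau,x(t),x_{t})-f(t,x(t),x_{t})\big].
\end{equation*}
If $\tau$ belongs to the translation set $\Omega(\sqrt{\epsilon}/2)$ attached to $f$ and the compact set $K$, the second bracket has norm $<\sqrt{\epsilon}/2$. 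For the first bracket it suffices that $\|x(t+\tau)-x(t)\|_{L_{2}(P,H)}+\|x_{t+\tau}-x_{t}\|_{\mathcal{B}}<\delta'$; this holds provided $\tau$ is simultaneously an $\eta$-translation number of $x$ for a suitable $\eta\le\delta'$ and, in the sense of property (1), shifts the impulse points nearly onto one another, so that the delay term stays controlled. Hence $T(F,\epsilon)$ contains the intersection of the relatively dense sets $T(x,\eta)$, $\Omega(\sqrt{\epsilon}/2)$ (for $f$), and $Q_{\eta}$ (from property (1)). The main obstacle lies exactly here: one must invoke that a finite intersection of such relatively dense sets of common $\epsilon$-almost periods of $x$ and $f$ is again relatively dense, and — interlaced with this — control the history term $\|x_{t+\tau}-x_{t}\|_{\mathcal{B}}$, the delicate point being that $t\mapsto x_{t}$ only inherits the piecewise-continuous structure of $x$, so the impulse points must be handled with care. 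Once property (3) is settled, items (1)--(3) together yield $F\in AP_{T}(\mathbf{R},L_{2}(P,H))$, which completes the proof.
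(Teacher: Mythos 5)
Your proposal is correct and follows essentially the same route as the paper: relative compactness of $R(x)$ via Lemma 2.5, the two-term splitting of $f(t+\tau,x(t+\tau),x_{t+\tau})-f(t,x(t),x_{t})$, uniform continuity on the compact range for the first term, and common $\epsilon$-translation numbers of $f$, $x$ and $x_{t}$ for the second. The ``obstacle'' you flag (a common relatively dense set of translation numbers for $f$, $x$ and the history $x_{t}$) is handled in the paper only by assertion, invoking the standard common-almost-periods fact (in the spirit of Lemma 2.8), so your treatment is if anything slightly more explicit, not less complete.
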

\begin{proof}
 Since $x\in AP_{T}(\mathbf{R},L_{2}(P,H))$ by the theorem 1.2.7 of \cite{13}, $t\rightarrow x_{t}\in AP_{T}(\mathbf{R},L_{2}(P,\mathcal{B})).$ By lemma \ref{lem}, $R(x)$ is a relative compact subset of $L_{2}(P,H)$. Because $f(t,\cdot,\cdot)$ is continuous on each compact subset  $K\subseteq L_{2}(P,H)$ uniformly for $t\in \mathbf{R},$ then for any $\epsilon>0,$ there exists a number $\delta': 0<\delta'<\epsilon/4$ such that
 \begin{equation}\label{con}
 \|f(t,x_{1}(t),(x_{1})_{t})-f(t,x_{2}(t),(x_{2})_{t})\|_{L_{2}(P,H)}<\epsilon/4,
 \end{equation}
where $x_{1}(t),x_{2}(t)\in R(x)$ and $\|x_{1}(t)-x_{2}(t)\|_{L_{2}(P,H)}+\|(x_1)_{t}-(x_2)
_{t}\|_\mathcal{B}<\delta'.$
By square mean piecewise almost periodic of $f$, $x(t)$ and $x_{t},$ there exists a relative dense set $\Omega(\epsilon)$ of $\mathbf{R}$ such that the following conditions holds
\begin{equation}\label{con1}
  \|f(t+\tau, x(t_{0}), x_{t_{0}})-f(t, x(t_{0}), x_{t_{0}})\|_{L_{2}(P,H)}<\epsilon/4,
\end{equation}
\begin{equation}\label{con2}
\|x(t+\tau)-x(t)\|_{L_{2}(P,H)}<\epsilon/4,\quad\|x_{t+\tau}-x_{t}\|_{\mathcal{B}}<\epsilon/4,
\end{equation}
for every $x(t_{0})\in R(x),x_{t_{0}}=x(t_{0}+s),\,s\in(-\infty,0],\,t\in\mathbf{R},\,|t-t_{i}|>\epsilon,\,i\in\mathbf{Z},\,\tau\in\Omega(\epsilon).$ Note that
\begin{eqnarray}\label{coc}
 \nonumber  &&\|f(t+\tau, x(t+\tau), x_{t+\tau})-f(t, x(t), x_{t})\|_{L_{2}(P,H)}\\
\nonumber  &\leq &\|f(t+\tau, x(t+\tau), x_{t+\tau})-f(t+\tau, x(t), x_{t})\|_{L_{2}(P,H)}\\\nonumber &&+\|f(t+\tau, x(t), x_{t})-f(t, x(t), x_{t})\|_{L_{2}(P,H)}.
\end{eqnarray}
Combining (\ref{con}),(\ref{con1}) and (\ref{con2}) it follows that
\begin{equation*}
  \|f(t+\tau, x(t+\tau), x_{t+\tau})-f(t, x(t), x_{t})\|_{L_{2}(P,H)}<\epsilon.
\end{equation*}
\end{proof}

Obviously the uniform continuity is weaker than the Lipschitz condition. Then we get the following Corollary.
\begin{corollary}
Suppose that $f(t,x,\tilde{x})\in AP_{T}(\mathbf{R}\times L_{2}(P,H)\times\mathcal{B},L_{2}(P,H))$ and there exists a positive number $M_{f}$ such that for all $t\in \mathbf{R}$ and $(x_{1},\tilde{x_{1}}),\,(x_{2},\tilde{x_{2}})\in L_{2}(P,H)\times\mathcal{B}$
\begin{eqnarray}\label{lipf}
% \nonumber to remove numbering (before each equation)
  \|f(t,x_{1},\tilde{x_{1}})-f(t,x_{2},\tilde{x_{2}})\|_{L_{2}(P,H)}&\leq&M_{f}(\|x_{1}-x_{2}\|_{L_{2}(P,H)}+\|\tilde{x_{1}}-\tilde{x_{2}}\|_{\mathcal{B}}).
\end{eqnarray}
Then the function $t\rightarrow f(t,x(t),x_{t})\in AP_{T}(\mathbf{R},L_{2}(P,H))$ for any $x\in AP_{T}(\mathbf{R},L_{2}(P,H)).$
\end{corollary}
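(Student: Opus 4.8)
The plan is to obtain this as an immediate consequence of the preceding Lemma, the only task being to verify that the global Lipschitz estimate (\ref{lipf}) implies the uniform–continuity hypothesis assumed there. Fix $\epsilon>0$ and put $\delta'=\epsilon/(2M_f)>0$. If $(x_1,\tilde{x}_1),(x_2,\tilde{x}_2)$ are arbitrary elements of $L_2(P,H)\times\mathcal{B}$ with $\|x_1-x_2\|_{L_2(P,H)}+\|\tilde{x}_1-\tilde{x}_2\|_{\mathcal{B}}<\delta'$, then (\ref{lipf}) yields
\[
\|f(t,x_1,\tilde{x}_1)-f(t,x_2,\tilde{x}_2)\|_{L_2(P,H)}\le M_f\,\delta'=\epsilon/2<\epsilon
\]
for every $t\in\mathbf{R}$. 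In particular this estimate holds whenever $(x_1,\tilde{x}_1),(x_2,\tilde{x}_2)$ lie in a prescribed compact subset $K\subseteq L_2(P,H)\times\mathcal{B}$, so $f(t,\cdot,\cdot)$ is uniformly continuous on each compact subset of $L_2(P,H)\times\mathcal{B}$, uniformly with respect to $t\in\mathbf{R}$, with a modulus $\delta'=\delta'(\epsilon)$ that does not depend on $K$.

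Since $f\in AP_{T}(\mathbf{R}\times L_{2}(P,H)\times\mathcal{B},L_{2}(P,H))$ is assumed and the uniform–continuity requirement has just been checked, the hypotheses of the Lemma are met, and applying it verbatim gives $t\mapsto f(t,x(t),x_{t})\in AP_{T}(\mathbf{R},L_{2}(P,H))$ for every $x\in AP_{T}(\mathbf{R},L_{2}(P,H))$. There is essentially no obstacle in this argument: it is precisely the remark that a Lipschitz bound is stronger than (indeed, uniform in $K$) the uniform continuity exploited by the Lemma. The only minor point is the cosmetic choice of $\delta'$ slightly below $\epsilon/M_f$, which lets one land on the strict inequality phrased in the statement of the Lemma rather than a non-strict one.
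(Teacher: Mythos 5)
Your proof is correct and follows exactly the route the paper intends: the paper merely remarks that ``the uniform continuity is weaker than the Lipschitz condition'' before stating the corollary, and you have simply made that remark precise by exhibiting the modulus $\delta'=\epsilon/(2M_f)$ and invoking the preceding Lemma. Nothing further is needed.
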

\begin{lemma}\cite{14}\label{lem3} Assume that $f\in AP_{T}(\mathbf{R},L_{2}(P,H)),$ the sequence $\{x_{i}:i\in \mathbf{Z}\}$ is almost periodic in $L_{2}(P,H),$ and $\{t_{i}^{j}=t_{i+j}-t_{i}\},\,i\in\mathbf{Z},\,j=0,\,\pm1,\,\pm2,\ldots,$ are equipotentially almost periodic. Then for each $\epsilon>0,$ there are relative dense sets $\Omega_{\epsilon,f,x_{i}}$ of $\mathbf{R}$ and $Q_{\epsilon,f,x_{i}}$ of $\mathbf{Z}$ such that the following conditions hold.\\
(i) $\|f(t+\tau)-f(t)\|_{L_{2}(P,H)}<\epsilon$ for all $t\in\mathbf{R},$ $|t-t_{j}|>\epsilon,$ $\tau\in \Omega_{\epsilon,f,x_{i}},$ and $i\in\mathbf{Z}.$\\
(ii) $\|x_{i+q}-x_{i}\|_{L_{2}(P,H)}<\epsilon$ for all $q\in Q_{\epsilon,f,x_{i}}$ and $i\in \mathbf{Z}.$\\
(iii) For every $\tau\in \Omega_{\epsilon,f,x_{i}},$ there exists at least one number $q\in Q_{\epsilon,f,x_{i}}$ such that $|t_{i}^{q}-\tau|<\epsilon,\,i\in\mathbf{Z}.$
\end{lemma}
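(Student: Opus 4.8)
\emph{Proof proposal.} The plan is to produce, for each $\epsilon>0$, a \emph{single} relatively dense set $\Omega\subset\mathbf{R}$ whose elements are simultaneously (a) $\epsilon$-translation numbers of $f$, (b) within $\epsilon$ of some difference $t_i^{q}$ uniformly in $i$, and (c) such that the integer $q$ occurring in (b) is an $\epsilon$-almost period of the sequence $\{x_i\}$; then $Q:=\{q(\tau):\tau\in\Omega\}$ serves as the required subset of $\mathbf{Z}$. The mechanism is the square-mean piecewise analogue of the classical fact that a finite family of almost periodic functions possesses a common relatively dense set of $\delta$-translation numbers. To invoke it I would recast (b) and (c) through functions lying in $AP_T(\mathbf{R},\cdot)$ relative to the given sequence $\{t_i\}$.

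First I would introduce two auxiliary functions: to the almost periodic sequence $\{x_i\}$ associate the piecewise constant function $\tilde x(t):=x_i$ for $t\in(t_{i-1},t_i]$, and to $\{t_i\}$ the sawtooth $\sigma(t):=t-t_{i-1}$ for $t\in(t_{i-1},t_i]$. Since $\{t_i^{j}\}$ is equipotentially almost periodic, $\bar\gamma:=\sup_i(t_{i+1}-t_i)<\infty$, so $\sigma$ is bounded, and $\tilde x\in PC(\mathbf{R},L_2(P,H))$, $\sigma\in PC(\mathbf{R},\mathbf{R})$. Using the almost periodicity of $\{x_i\}$ and the equipotential almost periodicity of $\{t_i^{j}\}$ one checks that $\tilde x\in AP_T(\mathbf{R},L_2(P,H))$ and $\sigma\in AP_T(\mathbf{R},\mathbf{R})$; more precisely, if $\delta<\gamma/2$ and $\tau$ is a $\delta$-translation number of $\sigma$, then there is a \emph{unique} integer $q=q(\tau)$ with $|t_i^{q}-\tau|<c\,\delta$ for all $i$ (uniqueness because consecutive $t_k$ are at distance $\ge\gamma$; that a single $q$ works for all $i$ is precisely equipotentiality), and on the part of $(t_{i-1},t_i]$ at distance $>\delta$ from every $t_k$ one has $\tilde x(t+\tau)=x_{i+q}$.

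Now fix $\epsilon>0$ and choose $\delta=\delta(\epsilon,\gamma)>0$ so small that $\delta<\gamma/2$, $\delta<\epsilon$ and $c\,\delta<\epsilon$. Set $\Phi:=(f,\tilde x,\sigma)$; as in the classical theory, finite Cartesian products of square-mean piecewise almost periodic functions are again such, so $\Phi\in AP_T$. Let $\Omega_{\epsilon,f,x_i}:=T(\Phi,\delta)$ be the set of common $\delta$-translation numbers of $\Phi$; it is relatively dense in $\mathbf{R}$ by the defining relative-density property of square-mean piecewise almost periodic functions. For $\tau\in\Omega_{\epsilon,f,x_i}$ the $f$-component gives $\|f(t+\tau)-f(t)\|_{L_2(P,H)}<\delta<\epsilon$ whenever $|t-t_j|>\epsilon$ (a fortiori $|t-t_j|>\delta$), which is (i); the $\sigma$-component yields the integer $q(\tau)$ with $|t_i^{q(\tau)}-\tau|<c\,\delta<\epsilon$ for all $i$, which is (iii); and the $\tilde x$-component then forces $\|x_{i+q(\tau)}-x_i\|_{L_2(P,H)}<\delta<\epsilon$ for all $i$, which is (ii) with $q=q(\tau)$. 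Finally put $Q_{\epsilon,f,x_i}:=\{q(\tau):\tau\in\Omega_{\epsilon,f,x_i}\}$: taking $i=0$ (so $t_0=0$) gives $|t_{q(\tau)}-\tau|<\epsilon$, hence for $\tau$ ranging over $\Omega_{\epsilon,f,x_i}\cap[a,a+L]$ (with $L$ an inclusion length of $\Omega_{\epsilon,f,x_i}$) the points $t_{q(\tau)}$ stay in $[a-\epsilon,a+L+\epsilon]$, so the $q(\tau)$ fill an interval of integers of length bounded in terms of $L,\epsilon,\gamma$ only; therefore $Q_{\epsilon,f,x_i}$ is relatively dense in $\mathbf{Z}$.

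The step I expect to be the real obstacle is the verification, in the second paragraph, that $\tilde x$ and $\sigma$ genuinely belong to $AP_T$ and that a $\delta$-translation number of $\sigma$ is matched by a \emph{single} integer $q(\tau)$ aligning \emph{all} the differences $t_i^{q}$ with $\tau$: this is where the equipotential almost periodicity of $\{t_i^{j}\}$ is indispensable (a uniform density bound on $\{t_i\}$ alone would not yield a common shift $q$), and it is the ingredient to import from the theory of almost periodic impulsive systems. Once it is in place, the remaining steps are an unwinding of the definitions.
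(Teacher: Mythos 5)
The paper does not actually prove this lemma: it is quoted from Samoilenko and Perestyuk \cite{14}, so there is no in-paper argument to measure yours against, and I am assessing your proposal on its own terms. Your scaffolding is the standard one (encode $\{x_i\}$ as a piecewise-constant function $\tilde x$ subordinate to $\{t_i\}$, add a sawtooth $\sigma$ to track the jump times, and extract a common relatively dense set of translation numbers for the family $(f,\tilde x,\sigma)$), and the peripheral bookkeeping is fine: $\bar\gamma<\infty$ does follow from equipotentiality, $q(\tau)$ is unique once $\delta<\gamma/2$, and your derivation of the relative density of $Q_{\epsilon,f,x_i}$ in $\mathbf{Z}$ from $|t_{q(\tau)}-\tau|<\epsilon$ works.

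The gap sits exactly where you predict, and it is not a detail that can be deferred — it is the whole content of the lemma. First, the claim $\tilde x\in AP_T(\mathbf{R},L_2(P,H))$ is circular as a reduction: unwinding the definition, $\tau$ is an $\epsilon$-translation number of $\tilde x$ precisely when the integer $q$ aligned with $\tau$ (via $|t_i^q-\tau|<\delta$ for all $i$) is an $\epsilon$-almost period of the sequence $\{x_i\}$, so producing a relatively dense set of such $\tau$ is statements (ii) and (iii) combined, i.e.\ the thing to be proved. Second, the step ``finite Cartesian products of square-mean piecewise almost periodic functions are again such, so $T(\Phi,\delta)$ is relatively dense'' is the common-almost-periods theorem; in the piecewise setting this is a nontrivial result of essentially the same depth as the lemma (one must show the \emph{intersection} of the translation sets of $f$, $\tilde x$, $\sigma$ is still relatively dense, typically via a Bochner-type sequential criterion or a covering argument), and it cannot be invoked as a formality. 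Third, a $\delta$-translation number of your sawtooth $\sigma$ only yields, for each $i$ separately, an integer $q_i$ with $|\tau-t_{i-1}^{q_i}|<\delta$; that a single $q$ works uniformly in $i$ is the equipotentiality hypothesis, but that hypothesis supplies its own relatively dense set $Q_\delta$ which need not contain $T(\sigma,\delta)$, so yet another intersection (with yet another relative-density justification) is required. In short, you have correctly identified the architecture but the load-bearing step --- simultaneous $\epsilon$-almost periods for $f$, $\{x_i\}$ and $\{t_i\}$ --- is assumed rather than proved.
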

To prove our results, we need the following notations. Let $h:\mathbf{R}\rightarrow \mathbf{R}$ be a continuous function such that $h(t)\geq 1$ for all $t\in \mathbf{R}$ and $h(t)\rightarrow \infty$ as $t\rightarrow\infty.$ We consider the space
\begin{equation}
  (PC)_{h}^{0}(\mathbf{R},L_{2}(P,H))=\{u\in PC(\mathbf{R},L_{2}(P,H)):\,\lim_{|t|\rightarrow\infty}\frac{\|u(t)\|_{L_{2}(P,H)}}{h(t)}=0\}.
\end{equation}
which endowed with the norm $\|u\|_{h}=\sup_{t\in\mathbf{R}}\frac{\|u(t)\|_{L_{2}(P,H)}}{h(t)},$ is a Banach space.
\begin{lemma}\label{lem2}\cite{9}
A set $B\subseteq (PC)_{h}^{0}(\mathbf{R},L_{2}(P,H))$ is a relatively compact set if and only if\\
(1) $\lim_{|t|\rightarrow\infty}\frac{\|u(t)\|_{L_{2}(P,H)}}{h(t)}=0$ uniformly for $x\in B.$\\
(2)$B(t)=\{x(t):x\in B\}$ is relatively compact in $L_{2}(P,H)$ for every $t\in \mathbf{R}.$\\
(3)The set $B$ is equicontinuous on each interval $(t_{i},t_{i+1})\,(i\in \mathbf{Z}).$
\end{lemma}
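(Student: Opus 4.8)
The plan is to recognize Lemma~\ref{lem2} as an Arzel\`a--Ascoli type criterion for the weighted piecewise-continuous space, and to prove the two implications separately, using throughout that a subset of a complete metric space is relatively compact if and only if it is totally bounded (equivalently, every sequence admits a convergent subsequence). Completeness of $(PC)_h^0(\mathbf{R},L_2(P,H))$ is already in hand from its Banach space structure.

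For necessity, I would start from the observation that for each fixed $t$ the evaluation map $u\mapsto u(t)$ is Lipschitz from $(PC)_h^0(\mathbf{R},L_2(P,H))$ into $L_2(P,H)$, since $\|u(t)-v(t)\|_{L_2(P,H)}\le h(t)\|u-v\|_h$; hence if $B$ is relatively compact then $B(t)$ is the continuous image of a relatively compact set, which gives (2). For (1) and (3), fix $\epsilon>0$ and take a finite $\epsilon'$-net $u_1,\dots,u_m\in B$ in $\|\cdot\|_h$. Since each $u_k$ lies in $(PC)_h^0$, there is $T>0$ with $\|u_k(t)\|_{L_2(P,H)}/h(t)<\epsilon$ for $|t|>T$ and all $k$; a one-line triangle inequality then upgrades this to the uniform decay (1). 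For (3), on any compact subinterval of a fixed $(t_i,t_{i+1})$ the finitely many $u_k$ are uniformly continuous and $h$ is bounded above and below there, so a standard three-term estimate (approximate $u\in B$ by some $u_k$ in $\|\cdot\|_h$, then invoke uniform continuity of $u_k$) gives equicontinuity of $B$.

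For sufficiency I would take an arbitrary sequence $(u^n)\subset B$ and extract a $\|\cdot\|_h$-convergent subsequence. Fixing $\epsilon>0$, condition (1) provides $T>0$ with $\|u^n(t)\|_{L_2(P,H)}/h(t)<\epsilon/3$ for all $n$ and all $|t|>T$. Since $\gamma=\inf_i(t_{i+1}-t_i)>0$, the interval $[-T,T]$ meets only finitely many of the $(t_i,t_{i+1})$. On each such piece I would extend every $u\in B$ continuously to the closed subinterval (the one-sided limits at the $t_i$ exist by the equicontinuity in (3) together with completeness of $L_2(P,H)$), note that (2) --- propagated to the endpoints via the same equicontinuity --- makes the extended family pointwise relatively compact on each closed subinterval, and then apply the classical Arzel\`a--Ascoli theorem on each of these finitely many compacta, with a diagonal argument to get a single subsequence (not relabeled) converging uniformly on $[-T,T]$. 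Using $h\ge 1$, for $n,m$ large this forces $\sup_{|t|\le T}\|u^n(t)-u^m(t)\|_{L_2(P,H)}/h(t)\le\sup_{|t|\le T}\|u^n(t)-u^m(t)\|_{L_2(P,H)}<\epsilon/3$, while for $|t|>T$ the tail estimate gives $\|u^n(t)-u^m(t)\|_{L_2(P,H)}/h(t)<2\epsilon/3$; hence $\|u^n-u^m\|_h<\epsilon$ for large $n,m$, the subsequence is Cauchy, and by completeness it converges, so $B$ is relatively compact.

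The main obstacle, and essentially the only place where care is needed beyond routine bookkeeping, is the piecewise structure: one must push the Arzel\`a--Ascoli argument through the \emph{open} intervals $(t_i,t_{i+1})$, which means extending the functions to the closed subintervals and verifying that condition (2), via the equicontinuity in (3), still controls the one-sided boundary values at the jump points. The role of the hypothesis $h\ge 1$ is to serve as the glue between the ``tail'' estimate coming from (1) and the ``compact part'' estimate coming from (2)--(3); without it the two pieces would not combine into a single bound on $\|\cdot\|_h$.
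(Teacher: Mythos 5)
The paper does not prove this lemma; it is quoted verbatim from reference \cite{9} (Liu and Zhang) with no argument supplied, so there is nothing internal to compare against. Your proof is correct and is the standard Arzel\`a--Ascoli-type argument one would expect to find in \cite{9}: the necessity direction via a finite $\epsilon$-net in $\|\cdot\|_{h}$ and the Lipschitz evaluation maps, and the sufficiency direction by splitting $\mathbf{R}$ into the tail $|t|>T$ (controlled by (1)) and the finitely many closed pieces of $[-T,T]$ (handled by the vector-valued Arzel\`a--Ascoli theorem, for which the pointwise relative compactness in (2) is exactly what replaces boundedness in the infinite-dimensional target $L_{2}(P,H)$). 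You also correctly identify and resolve the only delicate points, namely extending members of $B$ to the closed subintervals via the one-sided limits and propagating (2) to those endpoint values through the equicontinuity in (3), and the role of $h\geq 1$ in gluing the two estimates into a single $\|\cdot\|_{h}$ bound.
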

The following  Krasnoselskii's fixed point theorem appearing in \cite{14}.
\begin{theorem}
Let $\mathcal{M}$ be a closed convex nonempty subset of a Banach space
$(X, \|\cdot\|)$. Suppose that $A,\, B:\,\mathcal{M}\rightarrow X$, such that
\\
(i) $Ax+By\in \mathcal{M}\,(\forall x,y\in\mathcal{M}),$\\
(ii) $A$ is completely continuous,\\
(iii) $B$ a contraction with contraction constant $k < 1.$
Then there is a $y\in\mathcal{M}$ with $Ay+By=y.$
\end{theorem}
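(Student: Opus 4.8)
The plan is to collapse the coupled equation $Ay+By=y$ into a single fixed point problem for one compact continuous map, and then apply the Schauder fixed point theorem. First I would fix $y\in\mathcal{M}$ and look at the map $x\mapsto Ay+Bx$ on $\mathcal{M}$: by hypothesis (i) it carries $\mathcal{M}$ into $\mathcal{M}$, and by hypothesis (iii) it is a $k$-contraction with $k<1$; since $\mathcal{M}$ is a closed subset of the Banach space $X$ it is complete, so the Banach contraction principle gives a unique point $\tau(y)\in\mathcal{M}$ with $\tau(y)=Ay+B\tau(y)$. This defines an operator $\tau:\mathcal{M}\to\mathcal{M}$, and clearly any fixed point of $\tau$ solves $Ay+By=y$, so it suffices to produce a fixed point of $\tau$.

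Next I would check that $\tau$ is continuous: subtracting the defining relations for $\tau(y_n)$ and $\tau(y)$ and using the contraction estimate on $B$ gives $(1-k)\|\tau(y_n)-\tau(y)\|\le\|Ay_n-Ay\|$, which tends to $0$ as $y_n\to y$ because $A$ is continuous. Then I would show that $\tau(\mathcal{M})$ is relatively compact. Here the key observation is that the lower bound $\|(I-B)u-(I-B)v\|\ge(1-k)\|u-v\|$ shows that $I-B$ (with $I$ the identity on $X$) is injective on $\mathcal{M}$ with a $\tfrac{1}{1-k}$-Lipschitz inverse on its range, and the identity $(I-B)\tau(y)=Ay$ exhibits $\tau$ as $(I-B)^{-1}\circ A$ on $\mathcal{M}$. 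Given any sequence $\tau(y_n)$ in $\tau(\mathcal{M})$, complete continuity of $A$ lets me extract a subsequence along which $Ay_{n_j}$ converges, hence is Cauchy; the Lipschitz bound then forces $\tau(y_{n_j})$ to be Cauchy as well, so it converges in the complete space $X$, and its limit lies in the closed set $\mathcal{M}$. Thus $\tau(\mathcal{M})$ is relatively compact.

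Finally, taking $C$ to be the closed convex hull of $\tau(\mathcal{M})$, I note that $C\subseteq\mathcal{M}$ (as $\mathcal{M}$ is closed and convex), that $C$ is compact by Mazur's theorem, and that $\tau(C)\subseteq\tau(\mathcal{M})\subseteq C$; so $\tau$ maps the nonempty compact convex set $C$ continuously into itself, and the Schauder fixed point theorem produces $y\in C\subseteq\mathcal{M}$ with $\tau(y)=y$, that is, $Ay+By=y$.

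The hard part will be the relative compactness in the second paragraph, and specifically not over-claiming: $(I-B)^{-1}$ is a priori defined only on $(I-B)(\mathcal{M})$, which need not be closed, so one cannot simply write $\tau(\mathcal{M})=(I-B)^{-1}(\overline{A(\mathcal{M})})$ and declare the right-hand side compact. Passing to Cauchy subsequences and invoking completeness of $X$ together with closedness of $\mathcal{M}$ is the clean way around this; everything else is routine.
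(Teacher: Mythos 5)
The paper does not prove this statement at all: it is quoted as a known result (the Krasnoselskii fixed point theorem, attributed to the literature), so there is no in-paper argument to compare against. Your proposal is the standard proof of that theorem and is correct: the reduction to the map $\tau=(I-B)^{-1}\circ A$ via the Banach contraction principle on the complete set $\mathcal{M}$, the estimate $(1-k)\|\tau(y_n)-\tau(y)\|\le\|Ay_n-Ay\|$ for continuity, the Cauchy-subsequence argument for relative compactness of $\tau(\mathcal{M})$ (correctly avoiding the trap of treating $(I-B)^{-1}$ as globally defined), and Mazur plus Schauder to finish are all sound. The only caveat is terminological: you implicitly read ``completely continuous'' as ``continuous with $A(\mathcal{M})$ relatively compact,'' which is the intended meaning in the classical statement (and harmless in this paper, where the set used is bounded), but if one instead reads it as ``maps bounded sets to relatively compact sets'' and $\mathcal{M}$ is unbounded, the extraction of a convergent subsequence of $Ay_n$ would need that extra hypothesis.
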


\section{Existence result}
In this section, we aim to give some sufficient conditions which guarantee the existence and the uniqueness of the almost periodic mild solution for system (\ref{eq1}).
\begin{definition}
An $\mathcal{F}_{t}$-adapted $H$- valued stochastic process $x(t)$ defined on $\mathbf{R}$ is called the mild solution for (\ref{eq1}) if
\begin{enumerate}
  \item [(a)] $\{x_{t}:\,t\in \mathbf{R}\}$ is $\mathcal{B}$ valued and $x(\cdot)\in PC(\mathbf{R},L_{2}(P,H))$;
  \item [(b)] $\int_{-\infty}^{t}\|x(u)\|_{H}^{2}\mathrm{d}u<\infty$ almost surely;
  \item [(c)] for any $t\in(t_{i},t_{i+1}]$, $x(t)$ satisfies the following integral equation:
  \begin{eqnarray}\label{sol}
\left\{ \begin{array}{ll}
 x(t)= T(t-\sigma)(\varphi(0)-G(\sigma,\varphi(0),\varphi))+G(t,x(t),x_{t})+\int_{\sigma}^{t}A T(t-s)G(s,x(s),x_{s})\mathrm{d}s\\
\quad\quad\quad+\int_{\sigma}^{t}T(t-s)f(s,x(s),x_{s})\mathrm{d}s+\int_{\sigma}^{t}T(t-s)g(s,x(s),x_{s})\mathrm{d}w(s)\\
  \quad\quad\quad+\sum_{\sigma<t_{i}<t}T(t-t_{i})I_{i}(x(t_{i})),\\
x_{\sigma}=\varphi\in\mathcal{B}.
\end{array} \right.
\end{eqnarray}
\end{enumerate}
\end{definition}

In order to get the existence of square mean piecewise almost periodic solution of system (\ref{eq1}), we introduce the following assumptions.
\begin{enumerate}
  \item[(A1)] The operator $A: D(A)\subseteq H\rightarrow H$ is the infinitesimal generator of an exponentially stable $C_{0}$-semigroup $\{T(t):t\geq0\}$ on $L_{2}(P,H)$, i.e., $\|T(t)\|\leq M e^{-\delta t}, t\geq 0, M, \delta>0.$ Moreover, $T(t)$ is compact for $t>0,$
     and $0\in \rho(A),$ where $\rho(A)$ is the resolvent set of $A.$ For $\alpha\in (0,1],$ it is possible to define the fractional power $(-A)^{\alpha}$ as a closed linear operator on its domain $D((-A)^{\alpha})$ and $D((-A)^{\alpha})$ is dense in $H.$ The expression $\|h\|_{\alpha}=\|(-A)^{\alpha}h\|_{H},\, h\in D((-A)^{\alpha}),$ defines a norm in $D((-A)^{\alpha}).$ Let $H_{\alpha}$ represent the space $D((-A)^{\alpha})$ endowed with the norm $\|\cdot\|_{\alpha},$ then for every $0<\alpha\leq 1$ $H_{\alpha}$ is a Banach space. There exists $M_{\alpha} >0$ such that $\|(-A)^{\alpha}T(t)\|\leq M_{\alpha}t^{-\alpha}e^{-\delta t},\,t>0,\,\delta>0.$

  \item[(A2)] $f\in\mathcal{A}\mathcal{P}_{T}(\mathbf{R}\times L_{2}(P,H)\times\mathcal{B},L_{2}(P,H)),\,g(t,\cdot,\cdot)\in\mathcal{A}\mathcal{P}_{T}\big(\mathbf{R}\times L_{2}(P,H)\times\mathcal{B},\mathcal{L}_{2}^{0}\big)$ for each compact set $K\subseteq L^{2}(P,H)\times\mathcal{B},$  $f(t,\cdot,\cdot),\,g(t,\cdot,\cdot)$ and $G(t,\cdot,\cdot),$ are uniformly continuous in each compact set $K\subseteq L^{2}(P,H)\times\mathcal{B}$ uniformly for $t\in \mathbf{R}.$ $I_{i}(x)\,(i\in \mathbf{Z})$ is almost periodic uniformly in $x\in H$ uniformly continuous function defined on $K.$ Let
 \begin{eqnarray*}
  &&F_{L}=\sup_{\{t\in \mathbf{R},\, \max\{\|x\|_{L_{2}(P,H)},\|\tilde{x}\|_{\mathcal{B}}\}<L\}}\|f(t,x,\tilde{x})\|_{L_{2}(P,H)}<\infty,\\
 && G_{L}=\sup_{\{t\in \mathbf{R},\, \max\{\|x\|_{L_{2}(P,H)},\|\tilde{x}\|_{\mathcal{B}}\}<L\} }\|g(t,x,\tilde{x})\|_{L_{2}(P,\mathcal{L}_{2}^{0})}<\infty,\\ &&I_{L}=\sup_{\{i\in \mathbf{Z}, \, \|x\|_{L_{2}(P,H)}<L\} }\|I(x)\|_{L^{2}(P,H)}<\infty,
\end{eqnarray*}
 where $L $ is an arbitrary positive number.
 \item[(A3)]  There are two constants $\alpha\in(0,1),$  $M_{G}>0$ such that the function $G\in AP(\mathbf{R}\times L_{2}(P,H)\times\mathcal{B},L_{2}(P,H_{\alpha})),$ and for any $(x_{1},\tilde{y}_{1}),\,(x_2,\tilde{y}_{2})\in L_{2}(P,H)\times\mathcal{B}$, $t\in\mathbf{R},$
     \begin{equation*}
    \|(-A)^{\alpha}G(t,x_{1},\tilde{y}_{1})-(-A)^{\alpha}G(t,x_{2},\tilde{y}_{2})\|_{L_{2}(P,H)}\leq M_{G}(\|x_{1}-x_{2}\|_{L_{2}(P,H)}+\|\tilde{y}_{1}-\tilde{y}_{2}\|_{\mathcal{B}}).
     \end{equation*}
    We further assume that $G(t,0,0)\equiv0$ for all $t\in \mathbf{R}.$
   \item[(A4)] Let $\{x_{n}\}\subseteq \mathcal{AP}_{T}({\mathbf{R},L_{2}(P,H)})$ be uniformly bounded in $R$ and uniformly convergent in each compact set of $\mathbf{R},$ then $f(t, x_{n}(t),(x_{n})_{t})$ is relatively compact in $PC(\mathbf{R},L_{2}(P,H)).$
\end{enumerate}

Since $\|T(t-\sigma)\|\leq M e^{-\delta (t-\sigma)},$  for all $t\geq \sigma,$ let $\sigma\rightarrow -\infty$ then we have $\|T(t-\sigma)\|\rightarrow 0,$ and the above (\ref{sol}) can be replaced by
\begin{eqnarray}\label{sol2}
\left\{ \begin{array}{ll}
 x(t)= G(t,x(t),x_{t})+\int_{-\infty}^{t}A T(t-s)G(s,x(s),x_{s})\mathrm{d}s\\
\quad\quad\quad+\int_{-\infty}^{t}T(t-s)f(s,x(s),x_{s})\mathrm{d}s+\int_{-\infty}^{t}T(t-s)g(s,x(s),x_{s})\mathrm{d}w(s)\\
  \quad\quad\quad+\sum_{t_{i}<t}T(t-t_{i})I_{i}(x(t_{i})).
\end{array} \right.
\end{eqnarray}
\begin{theorem}\label{theorem}
Let (A1)-(A4) be satisfied, $( \|(-A)^{-\alpha}\|+\frac{\Gamma(\alpha)}{\delta} M_{1-\alpha})M_{G}<\frac{1}{8}$ and there is a positive number $L_{0}$ such that $\frac{M}{\delta}  F_{L_{0}}+\frac{M}{2\delta}G_{L_{0}}+\frac{M}{1-e^{-\delta r}}I_{L_{0}}\leq \frac{L_{0}}{2\sqrt{6}},$  then (\ref{eq1}) has a unique square mean almost periodic mild solution.
\end{theorem}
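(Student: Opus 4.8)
The plan is to recast the integral equation (\ref{sol2}) as a fixed-point equation $x = \mathcal{A}x + \mathcal{B}x$ on the closed convex set $\mathcal{M} = \{x \in AP_{T}(\mathbf{R},L_{2}(P,H)) : \|x\|_{\infty} \le L_{0}\}$, where
\[
  (\mathcal{B}x)(t) = G(t,x(t),x_{t}) + \int_{-\infty}^{t} A\,T(t-s)\,G(s,x(s),x_{s})\,\mathrm{d}s,
\]
\begin{align*}
  (\mathcal{A}x)(t) &= \int_{-\infty}^{t} T(t-s)\,f(s,x(s),x_{s})\,\mathrm{d}s + \int_{-\infty}^{t} T(t-s)\,g(s,x(s),x_{s})\,\mathrm{d}w(s) \\
  &\quad{}+ \sum_{t_{i}<t} T(t-t_{i})\,I_{i}(x(t_{i})),
\end{align*}
and then to verify the three hypotheses of the Krasnoselskii fixed point theorem stated above, with $\mathcal{A}$ in the role of the completely continuous map and $\mathcal{B}$ in the role of the contraction.

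\emph{Step 1: both operators leave $AP_{T}(\mathbf{R},L_{2}(P,H))$ invariant.} For $x\in AP_{T}(\mathbf{R},L_{2}(P,H))$, Theorem~1.2.7 of \cite{13} shows $t\mapsto x_{t}$ is square-mean piecewise almost periodic with values in $L_{2}(P,\mathcal{B})$, so by the composition results above ($f(t,\cdot,\cdot),g(t,\cdot,\cdot),G(t,\cdot,\cdot)$ being uniformly continuous on compacts by (A2)--(A3)) the processes $t\mapsto f(t,x(t),x_{t})$ and $t\mapsto g(t,x(t),x_{t})$ are square-mean piecewise almost periodic, and $\{I_{i}(x(t_{i}))\}_{i}$ is an almost periodic sequence since $\{t_{i}\}$ is equipotentially almost periodic and $I_{i}$ is almost periodic in $x$. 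The substantive part is that the operators $\int_{-\infty}^{t}T(t-s)(\cdot)\,\mathrm{d}s$, $\int_{-\infty}^{t}AT(t-s)(\cdot)\,\mathrm{d}s$, the stochastic convolution, and $\sum_{t_{i}<t}T(t-t_{i})(\cdot)$ preserve square-mean piecewise almost periodicity. Here Lemma~\ref{lem3} is the main tool: an $\epsilon$-translation number $\tau$ of the integrand together with an integer $q$ with $|t_{i}^{q}-\tau|<\epsilon$ yields an $\epsilon'$-translation of the convolution, the decay factor $Me^{-\delta(t-s)}$ (respectively $M_{1-\alpha}(t-s)^{\alpha-1}e^{-\delta(t-s)}$ for the $AT$-term, using (A1)) making the tail at $-\infty$ negligible; for the stochastic convolution one first passes through the It\^{o} isometry to reduce everything to an $L_{2}(P,\mathcal{L}_{2}^{0})$-estimate of $g$.

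\emph{Step 2: the self-mapping property and the contraction.} For $x,y\in\mathcal{M}$ one estimates $\mathbf{E}\|(\mathcal{A}x)(t)+(\mathcal{B}y)(t)\|_{H}^{2}$ by repeated use of $\|a+b\|^{2}\le2\|a\|^{2}+2\|b\|^{2}$ (whose bookkeeping accounts for the factor $\sqrt{6}$), the semigroup bounds $\|T(t)\|\le Me^{-\delta t}$ and $\|(-A)^{1-\alpha}T(t)\|\le M_{1-\alpha}t^{\alpha-1}e^{-\delta t}$ together with the ensuing Gamma-function integral (which produces the constant $\tfrac{\Gamma(\alpha)}{\delta}M_{1-\alpha}$), the phase-space identity $\|y_{t}\|_{\mathcal{B}}=\sup_{s\le t}\|y(s)\|_{L_{2}(P,H)}\le\|y\|_{\infty}$, assumption (A3) with $G(t,0,0)\equiv0$ (so $\|G(t,y(t),y_{t})\|_{L_{2}(P,H)}\le\|(-A)^{-\alpha}\|M_{G}(\|y(t)\|_{L_{2}(P,H)}+\|y_{t}\|_{\mathcal{B}})$), the definitions of $F_{L_{0}},G_{L_{0}},I_{L_{0}}$ in (A2), the geometric bound $\sum_{t_{i}<t}Me^{-\delta(t-t_{i})}\le\tfrac{M}{1-e^{-\delta r}}$ coming from $\inf_{i}(t_{i+1}-t_{i})\ge r$, and the It\^{o} isometry for the $g$-term. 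The two quantitative hypotheses, $(\|(-A)^{-\alpha}\|+\tfrac{\Gamma(\alpha)}{\delta}M_{1-\alpha})M_{G}<\tfrac18$ and $\tfrac{M}{\delta}F_{L_{0}}+\tfrac{M}{2\delta}G_{L_{0}}+\tfrac{M}{1-e^{-\delta r}}I_{L_{0}}\le\tfrac{L_{0}}{2\sqrt{6}}$, are exactly calibrated so that the resulting bound is $\le L_{0}^{2}$, giving hypothesis (i). Applying the same chain of estimates to $(\mathcal{B}x)(t)-(\mathcal{B}y)(t)$, which only involves the Lipschitz bound (A3), shows $\mathcal{B}$ is a contraction with constant $k=(\|(-A)^{-\alpha}\|+\tfrac{\Gamma(\alpha)}{\delta}M_{1-\alpha})M_{G}<\tfrac18<1$, giving hypothesis (iii).

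\emph{Step 3: complete continuity of $\mathcal{A}$ --- the main obstacle.} Continuity of $\mathcal{A}$ on $\mathcal{M}$ follows from the uniform continuity of $f,g,I_{i}$ on compact sets (Lemma~\ref{lem} guarantees $R(x)$ is relatively compact), the dominated convergence theorem, and the It\^{o} isometry. For relative compactness of $\mathcal{A}(\mathcal{M})$ I would verify the criterion of Lemma~\ref{lem2}: uniform boundedness is immediate from Step~2; equicontinuity on each interval $(t_{i},t_{i+1})$ follows from strong continuity of $T(\cdot)$ and the exponential estimates; and relative compactness of each slice $\{(\mathcal{A}x)(t):x\in\mathcal{M}\}$ in $L_{2}(P,H)$ is obtained by the truncation trick --- writing $\int_{-\infty}^{t}=\int_{-\infty}^{t-\eta}+\int_{t-\eta}^{t}$, factoring $T(\eta)$ out of the first integral and invoking compactness of $T(\eta)$ for $\eta>0$, while the $\int_{t-\eta}^{t}$ piece, the tail of the impulsive sum, and the near-diagonal part of the stochastic convolution (controlled via the It\^{o} isometry) are all uniformly $O(\eta)$ as $\eta\to0$. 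The delicate points are the uniform handling of the infinite-time tail and extracting compactness from the stochastic convolution; once $\mathcal{A}(\mathcal{M})$ is relatively compact and, by Step~1, contained in $AP_{T}(\mathbf{R},L_{2}(P,H))$, hypothesis (ii) holds. The Krasnoselskii theorem then produces $y\in\mathcal{M}$ with $y=\mathcal{A}y+\mathcal{B}y$, which by construction is a square-mean piecewise almost periodic mild solution of (\ref{eq1}); uniqueness within $\mathcal{M}$ is obtained by subtracting the integral equations for two solutions and closing the resulting $L_{2}(P,H)$-estimate using the contraction constant of $\mathcal{B}$ together with a Lipschitz/Gronwall bound on the $f,g,I_{i}$ contributions.
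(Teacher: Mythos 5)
Your decomposition, your assignment of roles (the $G$-terms as the contraction, the $f$, $g$, impulsive terms as the completely continuous part), and your verification strategy --- including the truncation $\int_{-\infty}^{t}=\int_{-\infty}^{t-\eta}+\int_{t-\eta}^{t}$ with $T(\eta)$ factored out and compactness checked via Lemma~\ref{lem2} --- coincide with the paper's own proof. The only caveat, which applies equally to the paper, is that Krasnoselskii's theorem yields existence but not uniqueness, and your closing Gronwall sketch for uniqueness would need Lipschitz conditions on $f$, $g$, $I_{i}$ that (A1)--(A4) do not supply (they appear only in Corollary~\ref{cor}).
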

\begin{proof}%Let $y:(-\infty,T]\rightarrow H$ be defined by
%
%
% \begin{eqnarray}\label{yl}
%y(t)=\left\{ \begin{array}{ll}
%\varphi(t),\qquad\qquad t\in (-\infty,0],\\
% T(t)(\varphi(0)-G(0,\varphi(0),\varphi)), \quad t\in J.
%\end{array} \right.
%\end{eqnarray}

Let $B=\{x\in \mathcal{AP}_{T}( \mathbf{R},L_{2}(P,H)):\|x\|_{L_{2}(P,H)}\leq L_{0}\}.$
%On the space $Y=\{x\in PC:\,x(0)=\varphi(0)\}$ endowed with the uniform convergence topology,
We define the operator $\Phi$ on $\mathcal{AP}_{T}( \mathbf{R},L_{2}(P,H))$ by
\begin{eqnarray}\label{phi}
\nonumber\Phi x(t)&=&G(t,x(t),x_{t})+\int_{-\infty}^{t}A T(t-s)G(s,x(s),x_{s})\mathrm{d}s\\
&&+\int_{-\infty}^{t}T(t-s)f(s,x(s),x_{s})\mathrm{d}s+\int_{-\infty}^{t}T(t-s)g(s,x(s),x_{s})\mathrm{d}w(s)\\
  \nonumber &&+\sum_{t_{i}<t}T(t-t_{i})I_{i}(x(t_{i})), t\in \mathbf{R}.
\end{eqnarray}
%Obviously $\Phi x\in PC, $
We aim to show that the operator $\Phi$ has a fixed point on $B,$ which implies (\ref{eq1}) has a unique square mean almost periodic mild solution. To this end, we decompose $\Phi$ as $\Phi=\Phi_{1}+\Phi_{2},$ where $\Phi_{1},$ $\Phi_{2}$ are defined on $B$, respectively, by
\begin{eqnarray}
\nonumber\Phi_{1}x(t)&=&G(t,x(t),x_{t})+\int_{-\infty}^{t}A T(t-s)G(s,x(s),x_{s})\mathrm{d}s,\\
    \nonumber \Phi_{2}x(t)&=&\int_{-\infty}^{t}T(t-s)f(s,x(s),x_{s})\mathrm{d}s\\
    \nonumber &&+\int_{-\infty}^{t}T(t-s)g(s,x(s),x_{s})\mathrm{d}w(s)+\sum_{t_{i}<t}T(t-t_{i})I_{i}(x(t_{i})).
\end{eqnarray}
Our proof will be split into the following three steps.

Step 1. In what following, we prove that for any $x,y\in B,$  $\Phi_{1} x+\Phi_{2}y\in B.$
\begin{eqnarray*}
% \nonumber to remove numbering (before each equation)
 &&\|\Phi_{1} x\|_{L_{2}(P,H)}\\
  &=& \|G(t,x(t),x_{t})+\int_{-\infty}^{t}A T(t-s)G(s,x(s),x_{s})\mathrm{d}s\|_{L_{2}(P,H)} \\
 &\leq&\sqrt{2} \|G(t,x(t),x_{t})\|_{L_{2}(P,H)}+\sqrt{2}  \|\int_{-\infty}^{t}A T(t-s)G(s,x(s),x_{s})\mathrm{d}s\|_{L_{2}(P,H)}\\
 &\leq& \sqrt{2} \|(-A)^{-\alpha}\|\|(-A)^{\alpha}G(t,x(t),x_{t})\|_{L_{2}(P,H)}\\&&+\sqrt{2} \|\int_{-\infty}^{t}(-A)^{1-\alpha}T(t-s)[(-A)^{\alpha}G(s,x(s),x_{s})]\mathrm{d}s\|_{L_{2}(P,H)}\\
 &\leq&2\sqrt{2}  \|(-A)^{-\alpha}\|M_{G}\sup_{s\leq t}\|x(s)\|_{L_{2}(P,H)}+2\sqrt{2} \frac{\Gamma(\alpha)}{\delta} M_{1-\alpha}M_{G}\sup_{s\leq t}\|x(s)\|_{L_{2}(P,H)}\\
 &=&2\sqrt{2} ( \|(-A)^{-\alpha}\|+\frac{\Gamma(\alpha)}{\delta} M_{1-\alpha})M_{G}\|x\|_{\infty}.
 \end{eqnarray*}
\begin{eqnarray*}
% \nonumber to remove numbering (before each equation)
  \|\Phi_{2} y\|_{L_{2}(P,H)}
   &\leq& \sqrt{3}[\mathbf{E}(\int_{-\infty}^{t}\|T(t-s)\|\|f(s,y(s),y_{s})\|_{H}\mathrm{d}s)^{2} ]^{\frac{1}{2}}\\ &&+\sqrt{3}(\mathbf{E}\int_{-\infty}^{t}\|T(t-s)\|^{2}\|g(s,y(s),y_{s})\|^{2}_{\mathcal{L}^{0}_{2}}\mathrm{d}s)^{\frac{1}{2}}\\&&
   +\sqrt{3}\|\sum_{t_{i}<t}T(t-t_{i})I_{i}(y(t_{i}))\|_{_{L_{2}(P,H)}}\\
   &\leq& \sqrt{3}(\frac{M}{\sigma}  F_{L_{0}}+\frac{M}{2\sigma}G_{L_{0}}+M\sum_{0\leq k=j-i<\infty}e^{-\delta k r}I_{L_{0}}) \\
   &=&  \sqrt{3}( \frac{M}{\delta}  F_{L_{0}}+\frac{M}{2\delta}G_{L_{0}}+\frac{M}{1-e^{-\delta r}}I_{L_{0}}).
\end{eqnarray*}
Then by using the elementary inequality $|a+b|^{2}\leq 2(a^{2}+b^{2}),$ we can show that
\begin{eqnarray}\label{condi}
\|\Phi_{1} x+\Phi_{2}y\|_{L_{2}(P,H)}&\leq&\sqrt{2}\|\Phi_{1} x\|_{L_{2}(P,H)}+\sqrt{2}\|\Phi_{2}y\|_{L_{2}(P,H)}\\
\nonumber&\leq&4( \|(-A)^{-\alpha}\|+\frac{\Gamma(\alpha)}{\delta} M_{1-\alpha})M_{G}\|x\|_{\infty}\\\nonumber&&+\sqrt{6}(\frac{M}{\delta}  F_{L_{0}}+\frac{M}{2\delta}G_{L_{0}}+\frac{M}{1-e^{-\delta r}}I_{L_{0}}).
\end{eqnarray}
Due to
\begin{eqnarray}\label{conditon1}
  ( \|(-A)^{-\alpha}\|+\frac{\Gamma(\alpha)}{\delta} M_{1-\alpha})M_{G}\leq \frac{1}{8}\, \text{and}\,\frac{M}{\delta}  F_{L_{0}}+\frac{M}{2\delta}G_{L_{0}}+\frac{M}{1-e^{-\delta r}}I_{L_{0}}\leq \frac{L_{0}}{2\sqrt{6}},
\end{eqnarray}
then substituting (\ref{conditon1}) into (\ref{condi}) yields that
\begin{equation}\label{0}
\|\Phi_{1} x+\Phi_{2}y\|_{L_{2}(P,H)}\leq L_{0}.
\end{equation}

By (A2) and Lemma \ref{lem}, we know that $G(t,x(t),x_{t})$ and $f(t,x(t),x_{t})\in \mathcal{AP}_{T}(R,L_{2}(P,H)),$  $g(t,x(t),x_{t})\in \mathcal{AP}_{T}(R,L_{2}(P,\mathcal{L}^{0}_{2})),$ $\{I_{i}(x(t_{i}))\}$ is almost periodic. According to Lemma \ref{lem3}, for every $\epsilon>0,$ there exist relatively dense sets $\Omega_{\epsilon,G,f,g,I_{i}}$ of $R$ and $Q_{\epsilon,G,f,g,I_{i}}$ of $\mathbf{Z}$ such that for $\tau\in \Omega_{\epsilon,G,f,g,I_{i}},$ $\exists$ $q\in Q_{\epsilon,G,f,g,I_{i}},$ such that $\|x(t+\tau)-x(t)\|_{L_{2}(P,H)}<\epsilon,$
\begin{eqnarray}
\nonumber && \|G(t+\tau,x(t+\tau),x_{t+\tau})-G(t,x(t),x_{t})\|_{L_{2}(P,H))}\leq \epsilon,\, \\\nonumber&& \|f(t+\tau,x(t+\tau),x_{t+\tau})-f(t,x(t),x_{t})\|_{L_{2}(P,H))}\leq \epsilon,\\
&&\nonumber  \|g(t+\tau,x(t+\tau),x_{t+\tau})-g(t,x(t),x_{t})\|_{L_{2}(P,\mathcal{L}^{0}_{2})}\leq \epsilon,\, |t_{i}^{q}-\tau|\leq \epsilon,
 \end{eqnarray}
 where $t\in \mathbf{R},$ $|t-t_{i}|>\epsilon,$ $i\in \mathbf{Z}.$ Then for $\tau\in\Omega_{\epsilon,G,f,g,I_{i}},$ we have
 \begin{eqnarray}\label{1}
 % \nonumber to remove numbering (before each equation)
 \nonumber&&\|(-A)^{\alpha}G(s+\tau,x(s+\tau),x_{s+\tau})-(-A)^{\alpha}G(s,x(s),x_{s})\|_{L_{2}(P,H)}<2M_{G}\epsilon,\\
 \nonumber&& \|\Phi_{1}x(t+\tau)-\Phi_{1}x(t)\|_{L_{2}(P,H))} \\
  \nonumber&\leq& \sqrt{2}\|G(t+\tau,x(t+\tau),x_{t+\tau})-G(t,x(t),x_{t})\|_{L_{2}(P,H)}\\\nonumber&&+\sqrt{2} \|\int_{-\infty}^{t}A T(t-s)[G(s+\tau,x(s+\tau),x_{s+\tau})-G(s,x(s),x_{s})]\mathrm{d}s\|_{L_{2}(P,H)}  \\
    \nonumber&\leq& \sqrt{2} \epsilon+\sqrt{2}[\mathbf{E}(\int_{-\infty}^{t}(-A)^{1-\alpha}T(t-s)\|(-A)^{\alpha}G(s+\tau,x(s+\tau),x_{s+\tau})-(-A)^{\alpha}G(s,x(s),x_{s})\|_{H}\mathrm{d}s)^{2}]^{\frac{1}{2}}\\
    \nonumber&\leq& \sqrt{2}\epsilon+\sqrt{2}M_{1-\alpha}[\int_{-\infty}^{t}e^{-\delta(t-s)}(t-s)^{\alpha-1}\mathrm{d}s\mathbf{E}\int_{-\infty}^{t}e^{-\delta(t-s)}(t-s)^{\alpha-1}\\&&\|(-A)^{\alpha}G(s+\tau,x(s+\tau),x_{s+\tau})-(-A)^{\alpha}G(s,x(s),x_{s})\|_{H}^{2}\mathrm{d}s]^{\frac{1}{2}}\\
     \nonumber&\leq&\sqrt{2}\epsilon+2\sqrt{2}M_{1-\alpha}\frac{\Gamma(\alpha)}{\delta}M_{G}\epsilon,
 \end{eqnarray}
 \begin{eqnarray}\label{2}
 % \nonumber to remove numbering (before each equation)
 \nonumber&& \|\Phi_{2}y(t+\tau)-\Phi_{2}y(t)\|_{L_{2}(P,H))} \\
 \nonumber&\leq&\sqrt{3} [\mathbf{E}(\int_{-\infty}^{t}\|T(t-s)\|\|f(s+\tau,y(s+\tau),y_{s+\tau})-f(s,y(s),y_{s})\|_{H}\mathrm{d}s)^{2}]^{\frac{1}{2}} \\\nonumber&& +\sqrt{3}(\mathbf{E}\int_{-\infty}^{t}\|T(t-s)\|^{2}\|g(s+\tau,y(s+\tau),y_{s+\tau})-g(s,y(s),y_{s})\|^{2}_{\mathcal{L}_{2}^{0}}\mathrm{d}s)^{\frac{1}{2}}\\
   \nonumber&&+\sqrt{3}\|\sum_{t_{i}<t}T(t-t_{i})[I_{i+q}(y(t_{i+q}))-I_{i}(y(t_{i}))]\|_{L_{2}(P,H)}\\
 &\leq&\sqrt{3} \frac{M}{\delta}\epsilon+\sqrt{3}\frac{M}{\sqrt{2}\delta}\epsilon+\sqrt{3}\frac{M}{1-e^{-\delta r}}\epsilon.
  \end{eqnarray}
  Combining (\ref{0}) (\ref{1}) and (\ref{2}), it follows that for any $x,y\in B,$  $\Phi_{1} x+\Phi_{2}y\in B.$

Step 2.
 We show that $\Phi_{1}$ is a contraction.

Let $x,\,y\in B,$ then for each $t\in \mathbf{R},$ we have
\begin{eqnarray}\label{esti1}
\nonumber&&\|(\Phi_{1}x)(t)-(\Phi_{1}y)(t)\|_{L_{2}(P,H)}\\
\nonumber&\leq&\sqrt{2}\|G(t,x(t),x_{t})-G(t,y(t),y_{t}) \|_{L_{2}(P,H)}\\
\nonumber&&\quad+\sqrt{2}\|\int_{-\infty}^{t}A T(t-s)G(s,x(s),x_{s})-AT(t-s)G(s,y(s),y_{s})\mathrm{d}s\|_{L_{2}(P,H)}\\
\nonumber&\leq &\sqrt{2}\|(-A)^{-\alpha}\|\|(-A)^{\alpha}[G(t,x(t),x_{t})-G(t,y(t),y_{t})]\|_{L_{2}(P,H)}\\
\nonumber&&\quad+\sqrt{2}\|M_{G}M_{1-\alpha}(\int_{-\infty}^{t}e^{-\delta(t-s)}(t-s)^{\alpha-1}\mathrm{d}s)\sup_{s\leq t}(\|x(s)-y(s)\|_{L_{2}(P,H)}+\|x_{s}-y_{s}\|_{\mathcal{B}})\\
\nonumber&\leq&2\sqrt{2}[ \|(-A)^{-\alpha}\|+\frac{\Gamma(\alpha)}{\delta} M_{1-\alpha}]M_{G}\|x-y\|_{\infty}.
\end{eqnarray}
By the condition $[\|(-A)^{-\alpha}\|+\frac{\Gamma(\alpha)}{\delta} M_{1-\alpha}]M_{G}<\frac{1}{8}$, we see $\Phi_{1}$ is a contraction on $ B.$

 Let $\{x_{n}\}\subseteq \mathcal{AP}_{T}(\mathbf{R},L_{2}(P,H)),\,x_{n}\rightarrow x$ in $\mathcal{AP}_{T}(\mathbf{R},L_{2}(P,H))$ as $n\rightarrow\infty;$ by Lemma\ref{lem} , there is a compact subset $B_{0}\subseteq L_{2}(P,H)$ such that $x_{n}(t),\,x(t)\in B_{0}$ for all $t\in \mathbf{R},\,n\in\mathbf{N};$  here we assume $B\subseteq B_{0}.$ By (A2) for any given $\epsilon,$ there exists $\delta'>0$  such that when $(x,\tilde{x}),(y,\tilde{y})\in B_{0}$ and $\|x-y\|_{L_{2}(P,H)}+\|\tilde{x}-\tilde{y}\|_\mathcal{B}<\delta'$ implies that \\ $\|f(t,x,\tilde{x})-f(t,y,\tilde{y})\|_{L_{2}(P,H)}<\epsilon,$ $\|g(t,x,\tilde{x})-g(t,y,\tilde{y})\|_{L_{2}(P,\mathcal{L}^{0}_{2})}<\epsilon,$ $\|I(x(t_{i}))-I(y(t_{i}))\|_{L_{2}(P,H)}<\epsilon.$ For the above $\delta',$ there exists $n_{0}$ such that $\|x_{n}(t)-x(t)\|_{L_{2}(P,H)}+\|(x_{n})_{t}-x_{t}\|_{\mathcal{B}}<\frac{\delta'}{L_{0}}$ for $n>n_{0}$ and $t\in\mathbf{R},$ then

% \nonumber to remove numbering (before each equation)
 $\|f(t,x_{n}(t),(x_{n})_{t})-f(t,x(t),x_{t})\|_{L_{2}(P,H)}<\epsilon,\,\|g(t,x_{n}(t),(x_{n})_{t})-g(t,x(t),x_{t})\|_{\mathcal{L}_{2}^{0}}<\epsilon,\\
  \|I(x_{n}(t_{i}))-I(x(t_{i}))\|_{L_{2}(P,H)}<\epsilon,$ for $n>n_{0}$ and $t\in\mathbf{R}.$ Hence
\begin{eqnarray*}
% \nonumber to remove numbering (before each equation)
 &&\|\Phi_{2}(x_{n})(t)-\Phi_{2}(x)(t)\|_{L_{2}(P,H)}  \\
   &\leq&\sqrt{3} [\mathbf{E}(\int_{-\infty}^{t}\|T(t-s)\|\|f(t,x_{n}(t),(x_{n})_{t})-f(t,x(t),x_{t})\|_{H}\mathrm{d}s)^{2}]^{\frac{1}{2}} \\ && +\sqrt{3}[\mathbf{E}\int_{-\infty}^{t}\|T(t-s)\|^{2}\|g(t,x_{n}(t),(x_{n})_{t})-g(t,x(t),x_{t})\|_{\mathcal{L}_{2}^{0}}^{2}\mathrm{d}s]^{\frac{1}{2}}\\
   &&+\sqrt{3}[\mathbf{E}(\sum_{t_{i}<t}\|T(t-t_{i})\|\|I(x_{n}(t_{i}))-I(x(t_{i}))\|_{H})^{2}]^{\frac{1}{2}}\\
  &\leq&  \sqrt{3} M\epsilon\int_{-\infty}^{t} e^{-\delta(t-s)}\mathrm{d}s+\sqrt{3}[\int_{-\infty}^{t}M^{2} e^{-2\delta(t-s)}\epsilon^{2}\mathrm{d}s]^{\frac{1}{2}}+\sqrt{3}\sum_{t_{i}<t}M e^{-\delta(t-t_{i})}\epsilon\\
  &\leq& \sqrt{3}(\frac{M}{\delta}\epsilon+\frac{M}{\sqrt{2}\delta}\epsilon+\frac{M}{1-e^{\delta r}} \epsilon),
\end{eqnarray*}
 for all $n>n_{0}$ $t\in \mathbf{R}.$ This implies $\Phi_{2}$ is continuous.

 Step3. $\Phi_{2}$ is completely continuous on $ B.$

Claim 1. The set of functions $\Phi_{2}(B)$ is equicontinuous at each interval $(t_{i},t_{i+1})$ $(i\in \mathbf{Z}).$
Let $\epsilon>0$ small enough and $t_{i}<t^{'}<t^{''}<t_{i+1},$ $i\in \mathbf{Z},$ we get
\begin{eqnarray*}
% \nonumber to remove numbering (before each equation)
  (\Phi_{2}x)(t'')-(\Phi_{2}x)(t')&=& \int_{t'}^{t''}T(t''-s)f(s,x(s),x_{s})\mathrm{d}s
     +\int_{t'}^{t''}T(t''-s)g(s,x(s),x_{s})\mathrm{d}w(s)\\&&+\int_{-\infty}^{t'}[T(t''-s)-T(t'-s)]f(s,x(s),x_{s})\mathrm{d}s\\&&
    +\int_{-\infty}^{t'}[T(t''-s)-T(t'-s)]g(s,x(s),x_{s})\mathrm{d}w(s)\\
    &&+\sum_{t_{i}<t'}[T(t''-t_{i})-T(t'-t_{i})]I_{i}(x(t_{i})).
  \end{eqnarray*}
Since $\{T(t):t\geq 0\}$ is a $C_{0}$-semigroup, there exists $\mu<\min\{r,\,\frac{\epsilon}{5\sqrt{5}MF_{L_{0}}},\,\frac{\epsilon}{5\sqrt{5}MG_{L_{0}}}\}$ such that $t''-t'<\mu$ implies that $\|T(t''-t')-I\|<\min\{\frac{ \epsilon \delta}{5\sqrt{5}MF_{L_{0}}},\frac{ \epsilon \delta}{5\sqrt{5}MG_{L_{0}}},\frac{(1-e^{-\delta r})\epsilon}{5\sqrt{5}MI_{0}}\}.$
An application of the Cauchy-Schwarz inequality, we get
\begin{eqnarray*}
&&\| \int_{t'}^{t''}T(t''-s)f(s,x(s),x_{s})\mathrm{d}s\|_{L_{2}(P,H)}\\&\leq&[\mathbf{E}( \int_{t'}^{t''}Me^{-\delta(t''-s)}\|f(s,x(s),x_{s})\|_{H}\mathrm{d}s)^{2}]^{\frac{1}{2}}\\
 &\leq&M(\int_{t'}^{t''}e^{-\delta(t''-s)}\mathrm{d}s\mathbf{E}\int_{t'}^{t''}e^{-\delta(t''-s)}\|f(s,x(s),x_{s})\|_{H}^{2}\mathrm{d}s)^{\frac{1}{2}}\\
  &\leq& MF_{L_{0}}(t'-t'')\\
  &\leq& MF_{L_{0}}\frac{ \epsilon}{5\sqrt{5}MF_{L_{0}}}=\frac{\epsilon}{5\sqrt{5}}.
\end{eqnarray*}
\begin{eqnarray*}
% \nonumber to remove numbering (before each equation)
  &&[\mathbf{E}\|\int_{t'}^{t''}T(t''-s)g(s,x(s),x_{s})\mathrm{d}w(s)\|_{H}^{2}]^{\frac{1}{2}}\\ &=&[ \mathbf{E}\int_{t'}^{t''}  \|T(t''-s)g(s,x(s),x_{s})\|_{\mathcal{L}_{0}^{2}}\mathrm{d}s]^{\frac{1}{2}}\\
   &\leq& [\mathbf{E}\int_{t'}^{t''} M^{2}e^{-2\delta(t''-s)}\|g(s,x(s),x_{s})\|_{H}^{2}\mathrm{d}s]^{\frac{1}{2}} \\
   &\leq& MG_{L_{0}}(t'-t'')\\
    &\leq& MG_{L_{0}}\frac{ \epsilon}{5\sqrt{5}MG_{L_{0}}}=\frac{\epsilon}{5\sqrt{5}}.
\end{eqnarray*}
\begin{eqnarray*}
% \nonumber to remove numbering (before each equation)
&& \|\int_{-\infty}^{t'}[T(t''-s)-T(t'-s)]f(s,x(s),x_{s})\mathrm{d}s \|_{L_{2}(P,H)}\\&=&  \|\int_{-\infty}^{t'}[T(t''-t)-I]T(t'-s)f(s,x(s),x_{s})\mathrm{d}s \|_{L_{2}(P,H)}\\
  &\leq&[\mathbf{E}(\int_{-\infty}^{t'}\|T(t''-t')-I\|Me^{-\delta(t'-s)}\|f(s,x(s),x_{s})\|_{H}\mathrm{d}s )^{2}]^{\frac{1}{2}} \\
   &\leq& M\|T(t''-t')-I\|[\int_{-\infty}^{t'}e^{-\delta(t'-s)}\mathrm{d}s\mathbf{E}(\int_{-\infty}^{t'}e^{-\delta(t'-s)}\|f(s,x(s),x_{s})\|_{H}^{2}\mathrm{d}s)]^{\frac{1}{2}}\\
     &\leq& \frac{ \epsilon \delta}{5\sqrt{5}MF_{L_{0}}}\frac{M}{\delta}F_{L_{0}}=\frac{\epsilon}{5\sqrt{5}}.
\end{eqnarray*}
\begin{eqnarray*}
% \nonumber to remove numbering (before each equation)
 && [\mathbf{E}\|\int_{-\infty}^{t'}[T(t''-s)-T(t'-s)]g(s,x(s),x_{s})\mathrm{d}w(s)\|^{2}]^{\frac{1}{2}}\\&\leq&[\mathbf{E}\int_{-\infty}^{t'}\|[T(t''-t')-I]T(t'-s)g(s,x(s),x_{s})\|^{2}\mathrm{d}s]^{\frac{1}{2}} \\
  &\leq& [\mathbf{E}\int_{-\infty}^{t'}\|T(t''-t')-I\|^{2}M^{2}e^{-2\delta(t'-s)}\|g(s,x(s),x_{s})\|_{\mathcal{L_{2}^{0}}}^{2}\mathrm{d}s]^{\frac{1}{2}}\\
   &\leq& [\|T(t''-t)-I\|^{2} \int_{-\infty}^{t'}M^{2}e^{-2\delta(t'-s)}G_{L_{0}}\mathrm{d}s]^{\frac{1}{2}}\\
   &\leq& \frac{ \epsilon \delta}{5\sqrt{5}MG_{L_{0}}}\frac{M}{\sqrt{2}\delta}G_{L_{0}}<\frac{\epsilon}{5\sqrt{5}}.
\end{eqnarray*}
\begin{eqnarray*}
% \nonumber to remove numbering (before each equation)
  &&\|\sum_{t_{i}<t'}[T(t''-t_{i})-T(t'-t_{i})]I_{i}(x(t_{i}))\|_{L(P,H)}\\
% &=& \|\sum_{t_{i}<t'}[T(t''-t_{i})-I]T(t'-t_{i})I_{i}(x(t_{i}))\|_{L(P,H)}\\
  &\leq&[\mathbf{E}(\sum_{t_{i}<t'}\|T(t''-t')-I\|\|T(t'-t_{i})\|\|I_{i}(x(t_{i}))\|_{H})^{2}]^{\frac{1}{2}}\\
  &\leq&\sum_{t_{i}<t'}\frac{(1-e^{-\delta r})\epsilon}{5MI_{0}}M e^{-\delta (t'-t_{i})}I_{0}
   \leq \frac{\epsilon}{5\sqrt{5}}.
  %&\leq& (\sum_{t_{i}<t'}\|T(t''-t_{i})-I\|^{2}e^{-\delta(t'-t_{i})})^{\frac{1}{2}}[\sum_{t_{i}<t'}e^{-\delta(t'-t_{i})}\|I_{i}(x(t_{i}))\|^{2}_{L(P,H)}]^{\frac{1}{2}}\\
%
% % &\leq& \|T(t''-t_{i})-I\|^{2}\frac{M}{(1-e^{-\delta r})^{\frac{1}{2}}}[\sum_{t_{i}<t'}e^{-\delta(t'-t_{i})}\|I_{i}(x(t_{i}))\|^{2}_{L(P,H)}]^{\frac{1}{2}}
   % &\leq& \frac{\epsilon}{36 M I_{L_{0}}}\frac{M}{1-e^{-\delta r}}I_{L_{0}}=\frac{\epsilon}{36}.
\end{eqnarray*}
Therefore, for $x\in B$ and $t''-t'<\mu,$ $t',\,t''\in(t_{i},t_{i+1}),\,i\in\mathbf{Z},$
\begin{eqnarray*}
% \nonumber to remove numbering (before each equation)
  &&\| (\Phi_{2}x)(t'')-(\Phi_{2}x)(t')\|_{L_{2}(P,H)}\\
   &=& \sqrt{5} \|\int_{t'}^{t''}T(t''-s)f(s,x(s),x_{s})\mathrm{d}s\|_{L_{2}(P,H)}
     + \sqrt{5}\|\int_{t'}^{t''}T(t''-s)g(s,x(s),x_{s})\mathrm{d}w(s)\|_{L_{2}(P,H)}\\&&+ \sqrt{ 5}\|\int_{-\infty}^{t'}[T(t''-s)-T(t'-s)]f(s,x(s),x_{s})\mathrm{d}s\|_{L_{2}(P,H)}\\
    && +\sqrt{5}\|\int_{-\infty}^{t'}[T(t''-s)-T(t'-s)]g(s,x(s),x_{s})\mathrm{d}w(s)\|_{L_{2}(P,H)}\\
    &&+\sqrt{5}\|\sum_{t_{i}<t'}[T(t''-t_{i})-T(t'-t_{i})]I_{i}(x(t_{i}))\|_{L_{2}(P,H)}\leq \epsilon,
\end{eqnarray*}
which shows that $\{\Phi_{2} x:x\in B\}$ is equicontinuous at each interval $(t_{i},t_{i+1})$ $(i\in\mathbf{Z}).$

Claim 2.
 $\{\Phi_{2} x:x\in B\}$ maps $B$ into a precompact set in $B.$ That is, for each fixed $t\in R,$ the set $V(t)=\{\Phi_{2}x(t):x\in B\}$ is precompact in $B.$ For each $t\in \mathbf{R},$ $0<\epsilon<1,$ $x\in B,$ define
 \begin{eqnarray*}
 % \nonumber to remove numbering (before each equation)
  \Phi_{2}^{\epsilon} (x) (t)&=&\int_{-\infty}^{t-\epsilon}T(t-s)f(s,x(s),x_{s})\mathrm{d}s\\
    \nonumber &&+\int_{-\infty}^{t-\epsilon}T(t-s)g(s,x(s),x_{s})\mathrm{d}w(s)+\sum_{t_{i}<t-\epsilon}T(t-t_{i})I_{i}(x(t_{i}))\\
   &=&  T(\epsilon)[\int_{-\infty}^{t-\epsilon}T(t-\epsilon-s)f(s,x(s),x_{s})\mathrm{d}s\\
    \nonumber &&+\int_{-\infty}^{t-\epsilon}T(t-\epsilon-s)g(s,x(s),x_{s})\mathrm{d}w(s)+\sum_{t_{i}<t-\epsilon}T(t-\epsilon-t_{i})I_{i}(x(t_{i})).]\\
   &=& T(\epsilon)   (\Phi_{2} x) (t-\epsilon).
 \end{eqnarray*}
Since $\{\Phi_{2} x:x\in B\}$ is bounded and $T(\epsilon)$ is compact, $\{\Phi_{2}^{\epsilon} x (t):x\in B\}$ is a relatively compact subset of $L^{2}(P,H).$ Moreover, for $\epsilon$ is small enough and the points $t$ and $t-\epsilon$ belong to the same interval of continuity of $x,$ we can derive
\begin{eqnarray*}
% \nonumber to remove numbering (before each equation)
 && \Phi_{2} x (t)-\Phi_{2}^{\epsilon} x (t) = \int_{t-\epsilon}^{t}T(t-s)f(s,x(s),x_{s})\mathrm{d}s+\int_{t-\epsilon}^{t}T(t-s)g(s,x(s),x_{s})\mathrm{d}w(s)\\
 && \|\Phi_{2} x (t)-\Phi_{2}^{\epsilon} x (t)\|_{L_{2}(P,H)} \\
  &\leq& \sqrt{2}\|\int_{t-\epsilon}^{t}T(t-s)f(s,x(s),x_{s})\mathrm{d}s\|_{L_{2}(P,H)}+\sqrt{2}\|\int_{t-\epsilon}^{t}T(t-s)g(s,x(s),x_{s})\mathrm{d}w(s)\|_{L_{2}(P,H)} \\
   &\leq&   \sqrt{2}[\int_{t-\epsilon}^{t}Me^{-\delta(t-s)}\mathrm{d}s\mathbf{E}\int_{t-\epsilon}^{t}e^{-\delta(t-s)}\|f(s,x(s),x_{s})\|_{H}^{2}\mathrm{d}s]^{\frac{1}{2}}\\&&+\sqrt{2}[\mathbf{E}\int_{t-\epsilon}^{t}M^{2}e^{-2\delta(t-s)}\|g(s,x(s),x_{s})\|_{\mathcal{L}_{2}^{0}}^{2}\mathrm{d}s]^{\frac{1}{2}} \\
      &\leq&\sqrt{2} M\epsilon F_{L_{0}}+\sqrt{2}M\epsilon G_{L_{0}},
\end{eqnarray*}
so the set $V(t)=\{\Phi_{2}x(t):x\in B\}$ is precompact in $B$ for each $t\in R.$
 Since $\{\Phi_{2}x(t):x\in B\}\subseteq (PC)_{h}^{0}(\mathbf{R},L_{2}(P,H))$ and $\{\Phi_{2}x:x\in B\}$ satisfies the conditions of Lemma \ref{lem2},
the operator $\Phi_{2}$ is completely continuous. By the Krasnoselskii's fixed point theorem, we know that $\Phi$ has a fixed point $x\in B;$ that is (\ref{eq1}) has a square mean piecewise almost periodic solution $x(t).$ \end{proof}
\begin{corollary}\label{cor}
Suppose $G,\,f\in\mathcal{A}\mathcal{P}_{T}(\mathbf{R}\times L_{2}(P,H)\times\mathcal{B},L_{2}(P,H)),\,g(t,\cdot,\cdot)\in\mathcal{A}\mathcal{P}_{T}\big(\mathbf{R}\times L_{2}(P,H)\times\mathcal{B},\mathcal{L}_{2}^{0}\big).$ Condition (\ref{lipf}) and
\begin{eqnarray}\label{lipg}
% \nonumber to remove numbering (before each equation)
  &&\|g(t,x_{1},\tilde{x_{1}})-g(t,x_{2},\tilde{x_{2}})\|_{\mathcal{L}^{0}_{2}}\leq M_{g}(\|x_{1}-x_{2}\|_{L_{2}(P,H)}+\|\tilde{x_{1}}-\tilde{x_{2}}\|_{\mathcal{B}}),
\\
&&\|I_{i}(x_{1})-I_{i}(x_{2})\|_{L_{2}(P,H)}\leq M_{I}\|x_{1}-x_{2}\|_{L_{2}(P,H)},
\end{eqnarray}
 hold, for any $t\in \mathbf{R}$ and $(x_{1},\tilde{x_{1}}),\,(x_{2},\tilde{x_{2}})\in L_{2}(P,H)\times\mathcal{B},$ where $ M_{g},\,M_{I} $ are two positive constants. Moreover $4[ \|(-A)^{-\alpha}\|+\frac{\Gamma(\alpha)}{\delta} M_{1-\alpha}]M_{G}+\frac{2\sqrt{6}MM_{f}}{\delta}+\frac{2\sqrt{3}MM_{g}}{\delta}+\frac{\sqrt{6}M_{I}}{1-e^{-\delta r}}<1,$
  then (\ref{eq1}) has a unique square mean almost periodic solution.
\end{corollary}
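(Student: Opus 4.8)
The plan is to derive Corollary~\ref{cor} directly from \thmref{theorem} by checking that the hypotheses (A1)--(A4) together with the quantitative smallness condition of the theorem all follow from the assumptions listed here. The Lipschitz conditions (\ref{lipf}), (\ref{lipg}) and the bound on $I_i$ are uniform in $t$, hence in particular imply the uniform continuity of $f(t,\cdot,\cdot)$, $g(t,\cdot,\cdot)$, $G(t,\cdot,\cdot)$ and $I_i$ on compact sets that is required in (A2); condition (A3) is then just (\ref{lipf}) written with $(-A)^\alpha G$ and $G(t,0,0)\equiv 0$. For (A4), I would invoke Corollary~\ref{cor} of the paper (the one immediately following Lemma~2.?) which says that under a global Lipschitz bound $t\mapsto f(t,x(t),x_t)\in AP_T(\mathbf{R},L_2(P,H))$ whenever $x\in AP_T$; combined with Lemma~\ref{lem} (relative compactness of the range of an $AP_T$ function) and the uniform-boundedness/uniform-convergence hypothesis on $\{x_n\}$, the standard $\varepsilon/3$ argument gives relative compactness of $\{f(t,x_n(t),(x_n)_t)\}$ in $PC(\mathbf{R},L_2(P,H))$.

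Next I would supply the two quantitative conditions of \thmref{theorem}. The first, $(\|(-A)^{-\alpha}\|+\tfrac{\Gamma(\alpha)}{\delta}M_{1-\alpha})M_G<\tfrac18$, is immediate from the displayed inequality of the corollary since each of the three added nonnegative terms $\tfrac{2\sqrt6 MM_f}{\delta}$, $\tfrac{2\sqrt3 MM_g}{\delta}$, $\tfrac{\sqrt6 M_I}{1-e^{-\delta r}}$ is $\ge 0$, so $4(\|(-A)^{-\alpha}\|+\tfrac{\Gamma(\alpha)}{\delta}M_{1-\alpha})M_G<1$. For the second condition --- existence of $L_0$ with $\tfrac{M}{\delta}F_{L_0}+\tfrac{M}{2\delta}G_{L_0}+\tfrac{M}{1-e^{-\delta r}}I_{L_0}\le \tfrac{L_0}{2\sqrt6}$ --- I would use the Lipschitz bounds to get linear-growth estimates: since $G(t,0,0)=0$ one may, after possibly normalizing, take $f(t,0,0)$, $g(t,0,0)$, $I_i(0)$ to be bounded (they lie in $AP_T$, hence are bounded), so that $F_L\le a_f+M_f'L$, $G_L\le a_g+M_gL$, $I_L\le a_I+M_IL$ for constants $a_\bullet$ coming from the values at $0$. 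Plugging these in, the required inequality becomes $c_0+c_1 L_0\le \tfrac{L_0}{2\sqrt6}$ for explicit $c_0\ge0$ and $c_1=\tfrac{M M_f'}{\delta}+\tfrac{M M_g}{2\delta}+\tfrac{M M_I}{1-e^{-\delta r}}$; this is solvable for large $L_0$ precisely when $c_1<\tfrac{1}{2\sqrt6}$, i.e. $2\sqrt6\,c_1<1$, and $2\sqrt6\,c_1\le 4(\|(-A)^{-\alpha}\|+\tfrac{\Gamma(\alpha)}{\delta}M_{1-\alpha})M_G+\tfrac{2\sqrt6 MM_f}{\delta}+\tfrac{2\sqrt3 MM_g}{\delta}+\tfrac{\sqrt6 M_I}{1-e^{-\delta r}}<1$ by the corollary's hypothesis.

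With (A1)--(A4) and both quantitative conditions verified, \thmref{theorem} yields a square-mean piecewise almost periodic mild solution in $B$. It remains to upgrade existence to uniqueness: here one computes, for two solutions $x,y\in B$, the difference $\|\Phi x(t)-\Phi y(t)\|_{L_2(P,H)}$ directly from (\ref{phi}) using (\ref{lipf}), (\ref{lipg}), the third displayed Lipschitz bound, and the semigroup estimates $\|T(t-s)\|\le Me^{-\delta(t-s)}$, $\|(-A)^{1-\alpha}T(t-s)\|\le M_{1-\alpha}(t-s)^{\alpha-1}e^{-\delta(t-s)}$, plus the Cauchy--Schwarz and It\^o isometry tricks already used in the proof of \thmref{theorem}, together with $\|x_s-y_s\|_{\mathcal B}=\sup_{u\le s}\|x(u)-y(u)\|_{L_2(P,H)}\le\|x-y\|_\infty$. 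This produces $\|\Phi x-\Phi y\|_\infty\le \Lambda\|x-y\|_\infty$ with $\Lambda=4[\|(-A)^{-\alpha}\|+\tfrac{\Gamma(\alpha)}{\delta}M_{1-\alpha}]M_G+\tfrac{2\sqrt6 MM_f}{\delta}+\tfrac{2\sqrt3 MM_g}{\delta}+\tfrac{\sqrt6 M_I}{1-e^{-\delta r}}<1$, so $\Phi$ is a contraction on the complete metric space $B$ and the Banach fixed point theorem gives a unique fixed point, which is the unique square-mean piecewise almost periodic mild solution. The main obstacle is bookkeeping the numerical constants: one must track how the $\sqrt2,\sqrt3,\sqrt5,\sqrt6$ factors from the repeated use of $|a_1+\dots+a_n|^2\le n\sum a_i^2$ propagate so that the final contraction constant matches exactly the expression in the hypothesis; getting the stochastic term to contribute $\tfrac{2\sqrt3 MM_g}{\delta}$ rather than $\tfrac{2\sqrt6 MM_g}{\delta}$ relies on using the It\^o isometry (an $L^2$ identity) instead of Cauchy--Schwarz for the $g$-integral, which is the one place the estimate is genuinely tighter than a naive bound.
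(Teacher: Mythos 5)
Your final paragraph is, in substance, exactly the paper's proof: the paper establishes that $\Phi=\Phi_{1}+\Phi_{2}$ is a contraction --- reusing the Step~2 estimate of \thmref{theorem} for $\Phi_{1}$ and deriving the analogous estimate for $\Phi_{2}$ from (\ref{lipf}), (\ref{lipg}) and the Lipschitz bound on $I_{i}$ (with the It\^o isometry for the stochastic term, which is where the $\tfrac{2\sqrt{3}MM_{g}}{\delta}$ rather than $\tfrac{2\sqrt{6}MM_{g}}{\delta}$ comes from, as you correctly note) --- and then invokes the contraction mapping principle. So the core of your argument coincides with the paper's.

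The detour in your first two paragraphs, where you re-verify the hypotheses of \thmref{theorem} so as to get existence from Krasnoselskii before proving uniqueness, is redundant: the contraction argument alone yields existence and uniqueness in one stroke, which is precisely why the corollary does not assume the $L_{0}$ condition of \thmref{theorem}. Moreover one step of that detour does not check out as stated: you claim $2\sqrt{6}\,c_{1}$ is dominated by the corollary's smallness constant, but the impulsive contribution to $2\sqrt{6}\,c_{1}$ is $\tfrac{2\sqrt{6}MM_{I}}{1-e^{-\delta r}}$, whereas the corollary's constant contains only $\tfrac{\sqrt{6}M_{I}}{1-e^{-\delta r}}$; since $M\geq\|T(0)\|=1$ for any $C_{0}$-semigroup bound, the claimed domination fails whenever $M_{I}>0$. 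This does not sink the proof, because your last paragraph is self-contained; simply drop the detour. What you do still need (and what the paper uses implicitly) is that $\Phi$ maps $\mathcal{AP}_{T}(\mathbf{R},L_{2}(P,H))$ into itself, which follows from the Lipschitz hypotheses via the composition corollary in Section~2, \lemref{lem3}, and the convolution estimates of Step~1 of \thmref{theorem}; Banach's theorem can then be applied on the whole space rather than on the ball $B$.
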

\begin{proof}
As the discussion in Theorem \ref{theorem}, $\Phi_{1}$ is a contracting mapping. We only need to show $\Phi_{2}$ is a contraction.
For $x,\,y\in B,$
\begin{eqnarray*}
% \nonumber to remove numbering (before each equation)
 \| \Phi_{2}(x)-\Phi_{2}(y)\|_{L_{2}(P,H)} &=& \sqrt{3}[\mathbf{E}(\int_{-\infty}^{t}\|T(t-s)\|\|f(s,x(s),x_{s})-f(s,y(s),y_{s})\|_{H}\mathrm{d}s)^{2}]^{\frac{1}{2}} \\\nonumber&& +\sqrt{3}(\mathbf{E}\int_{-\infty}^{t}\|T(t-s)\|^{2}\|g(s,y(s),y_{s})-g(s,y(s),y_{s})\|^{2}_{\mathcal{L}_{2}^{0}}\mathrm{d}s)^{\frac{1}{2}}\\
   \nonumber&&+\sqrt{3}\|\sum_{t_{i}<t}T(t-t_{i})[I_{i}(x(t_{i}))-I_{i}(y(t_{i}))]\|_{L_{2}(P,H)}\\
   &\leq&\sqrt{3}M[\mathbf{E}\int_{-\infty}^{t}e^{-\delta(t-s)}\mathrm{d}s\int_{-\infty}^{t}e^{-\delta(t-s)}\|f(s,x(s),x_{s})-f(s,y(s),y_{s})\|_{H}^{2}\mathrm{d}s]^{\frac{1}{2}}\\
   &&\sqrt{3}M(\mathbf{E}\int_{-\infty}^{t}e^{-2\delta(t-s)}\mathrm{d}s\|g(s,y(s),y_{s})-g(s,y(s),y_{s})\|^{2}_{\mathcal{L}_{2}^{0}}\mathrm{d}s)^{\frac{1}{2}}\\
&&+\sqrt{3}M\|\sum_{t_{i}<t}e^{-\delta(t-t_{i})}\|[I_{i}(x(t_{i}))-I_{i}(y(t_{i}))]\|_{L_{2}(P,H)}\\
&\leq& [\frac{2\sqrt{3}MM_{f}}{\delta}+\frac{2\sqrt{3}MM_{g}}{\sqrt{2}\delta}+\frac{\sqrt{3}M_{I}}{1-e^{-\delta r}}]\|x-y\|_{\infty}.
\end{eqnarray*}
So when $4[ \|(-A)^{-\alpha}\|+\frac{\Gamma(\alpha)}{\delta} M_{1-\alpha}]M_{G}+\frac{2\sqrt{6}MM_{f}}{\delta}+\frac{2\sqrt{3}MM_{g}}{\delta}+\frac{\sqrt{6}M_{I}}{1-e^{-\delta r}}<1,$ by the contraction mapping principle, $\Phi$ has a unique fixed point $x(t),$ which is the  square mean almost periodic solution of (\ref{eq1}).
\end{proof}

\section{Stability}
In this section , we consider the exponential stability of the piecewise almost periodic solution of system \ref{eq1}. We first prepare a Lemma.
\begin{lemma}\cite{9} \label{lemp}
Let a nonnegative piecewise continuous function $u(t)$ satisfying for $t\geq t_{0}$ the inequality
\begin{equation*}
  u(t)\leq C+\int_{t_{0}}^{t}v(\tau)u(\tau)\mathrm{d}\tau+\sum_{t_{0}<\tau_{i}<t}\beta_{i}u(\tau_{i}),
\end{equation*}
where $C\geq 0,$ $\beta_{i}\geq 0,$ $v(\tau)>0,$ and $\tau_{i}',\,i=1,2,\ldots$ are discontinuity points of first type of the function $u(t).$ Then the following estimate holds,
\begin{equation*}
  u(t)\leq C\prod_{t_{0}<\tau_{i}<t}(1+\beta_{i})e^{\int_{t_{0}}^{t}v(\tau)\mathrm{d}\tau}.
\end{equation*}
\end{lemma}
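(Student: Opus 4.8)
The plan is to reduce this impulsive integral inequality to a finite sequence of \emph{classical} (continuous) Gronwall estimates, one on each interval of continuity of $u$, and to pick up the multiplicative factor $1+\beta_i$ each time an impulse point $\tau_i$ is crossed. Since by hypothesis $\gamma=\inf(\tau_{i+1}-\tau_i)>0$ and $\tau_i\to\infty$, for every fixed $t\ge t_0$ only finitely many $\tau_i$ lie in $(t_0,t)$, so the induction terminates.

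First I would introduce the auxiliary function
\[
 w(t)=C+\int_{t_{0}}^{t}v(\tau)u(\tau)\,\mathrm{d}\tau+\sum_{t_{0}<\tau_{i}<t}\beta_{i}u(\tau_{i}),
\]
so that $u(t)\le w(t)$ for all $t\ge t_0$, $w(t_0)=C$, and on each interval $(\tau_{i},\tau_{i+1})$ the function $w$ is absolutely continuous with $w'(\tau)=v(\tau)u(\tau)\le v(\tau)w(\tau)$. At an impulse point one has $w(\tau_i^{+})=w(\tau_i^{-})+\beta_i u(\tau_i)$, and since the jump sum is left-continuous we have $u(\tau_i)\le w(\tau_i)=w(\tau_i^{-})$, whence $w(\tau_i^{+})\le(1+\beta_i)\,w(\tau_i^{-})$. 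This is the one place where the left-continuity convention for $u$ at discontinuity points of first type is used, and it is precisely what produces the clean factor $1+\beta_i$.

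Next I would argue by induction on the number of impulse points in $(t_0,t)$. On $[t_0,\tau_1)$ the jump sum is empty, so integrating $w'\le vw$ from $t_0$ gives $w(t)\le C\exp\!\big(\int_{t_0}^{t}v\big)$, hence $w(\tau_1^{-})\le C\exp\!\big(\int_{t_0}^{\tau_1}v\big)$ and $w(\tau_1^{+})\le(1+\beta_1)C\exp\!\big(\int_{t_0}^{\tau_1}v\big)$. Assuming $w(\tau_k^{+})\le C\prod_{i=1}^{k}(1+\beta_i)\exp\!\big(\int_{t_0}^{\tau_k}v\big)$, integrating $w'\le vw$ on $[\tau_k,\tau_{k+1})$ yields $w(t)\le w(\tau_k^{+})\exp\!\big(\int_{\tau_k}^{t}v\big)\le C\prod_{i=1}^{k}(1+\beta_i)\exp\!\big(\int_{t_0}^{t}v\big)$ for $t\in[\tau_k,\tau_{k+1})$, and applying the jump relation at $\tau_{k+1}$ advances the induction. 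For $t\in(\tau_k,\tau_{k+1})$ the product $\prod_{i=1}^{k}(1+\beta_i)$ coincides with $\prod_{t_0<\tau_i<t}(1+\beta_i)$ (and the bookkeeping is consistent at the endpoints, where one factor is dropped), so combining with $u\le w$ gives $u(t)\le C\prod_{t_0<\tau_i<t}(1+\beta_i)\exp\!\big(\int_{t_0}^{t}v\big)$ for every $t\ge t_0$.

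The genuinely routine ingredients are the scalar Gronwall lemma on each subinterval and the verification that $w$ is indeed absolutely continuous between impulses; the only step needing a bit of care is the index bookkeeping at the impulse points (getting $\prod_{t_0<\tau_i<t}$ rather than $\prod_{t_0<\tau_i\le t}$, and handling the left/right limits of $w$ consistently). I expect that bookkeeping to be the main, though still modest, obstacle. An equivalent route avoiding $w$ is to apply the continuous Gronwall inequality directly on each $(\tau_k,\tau_{k+1})$ with the constant $a_k=C+\int_{t_0}^{\tau_k}v u+\sum_{i\le k}\beta_i u(\tau_i)$ and then estimate $a_k\le(1+\beta_k)a_{k-1}\exp\!\big(\int_{\tau_{k-1}}^{\tau_k}v\big)$ recursively, which leads to the same product formula.
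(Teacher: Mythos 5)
Your argument is correct: the reduction to the classical Gronwall lemma on each interval of continuity via the majorant $w$, together with the jump estimate $w(\tau_i^{+})\le(1+\beta_i)w(\tau_i^{-})$, is precisely the standard proof of this impulsive Gronwall--Bellman inequality. The paper itself offers no proof (the lemma is quoted from \cite{9}, ultimately going back to Samoilenko and Perestyuk), so there is nothing to compare against beyond noting that your induction and the bookkeeping of the strict inequality $t_0<\tau_i<t$ are handled correctly.
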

\begin{theorem}
Assume the conditions of Corollary \ref{cor} are fulfilled and
 \begin{equation*}
  \frac{1}{\gamma}\ln(1+\frac{2M^{2}M_{I}^{2}}{\Lambda(1-e^{-\delta r})})+\frac{4M_{f}^{2}M^{2}}{\Lambda\delta}+\frac{4M_{g}^{2}M^{2}}{\Lambda}-\delta<0.
\end{equation*}
Then system (\ref{eq1}) has an exponentially stable almost periodic solution.
\end{theorem}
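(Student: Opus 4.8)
The plan is to show that any two square mean piecewise almost periodic mild solutions of (\ref{eq1}) coincide asymptotically at an exponential rate, so that the (already-established) solution from Theorem~\ref{theorem}/Corollary~\ref{cor} is globally exponentially stable. Let $x(t)$ be the almost periodic mild solution and let $y(t)$ be any other mild solution of (\ref{eq1}) with the same structure; set $z(t)=x(t)-y(t)$. Using the representation (\ref{sol2}) (equivalently the finite-horizon form (\ref{sol})) for both solutions and subtracting, I would write
\begin{eqnarray*}
z(t)&=&\big[G(t,x(t),x_t)-G(t,y(t),y_t)\big]+\int_{-\infty}^{t}AT(t-s)\big[G(s,x(s),x_s)-G(s,y(s),y_s)\big]\mathrm{d}s\\
&&+\int_{-\infty}^{t}T(t-s)\big[f(s,x(s),x_s)-f(s,y(s),y_s)\big]\mathrm{d}s\\
&&+\int_{-\infty}^{t}T(t-s)\big[g(s,x(s),x_s)-g(s,y(s),y_s)\big]\mathrm{d}w(s)+\sum_{t_i<t}T(t-t_i)\big[I_i(x(t_i))-I_i(y(t_i))\big].
\end{eqnarray*}
Taking $\mathbf{E}\|\cdot\|_H^2$, applying the elementary inequality $|\sum_{k=1}^{5}a_k|^2\le 5\sum a_k^2$, the It\^o isometry on the stochastic term, the Cauchy--Schwarz inequality on the deterministic convolutions, and the Lipschitz bounds (A3), (\ref{lipf}), (\ref{lipg}) together with the semigroup estimates $\|T(t)\|\le Me^{-\delta t}$, $\|(-A)^{1-\alpha}T(t)\|\le M_{1-\alpha}t^{\alpha-1}e^{-\delta t}$, I would derive an integral inequality for $u(t):=\sup_{s\le t}\mathbf{E}\|z(s)\|_H^2$ (or for $\mathbf{E}\|z(t)\|_H^2$ directly, controlling $\|z_s\|_{\mathcal B}^2$ by $u$ via property (2) of $\mathcal B$).

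The decisive move is to absorb the $G$-terms. Because $(\|(-A)^{-\alpha}\|+\frac{\Gamma(\alpha)}{\delta}M_{1-\alpha})M_G<\tfrac18<1$, the pointwise term and the convolution term involving $G$ contribute a factor strictly less than $1$ to $u(t)$, so they can be moved to the left-hand side, leaving an inequality of the shape
\begin{equation*}
e^{\delta t}\,\mathbf{E}\|z(t)\|_H^2\le C_1\int_{-\infty}^{t}e^{\delta s}\,\mathbf{E}\|z(s)\|_H^2\,\mathrm{d}s+C_2\sum_{t_i<t}e^{\delta t_i}\,\mathbf{E}\|z(t_i)\|_H^2,
\end{equation*}
with $C_1=\tfrac{4M_f^2M^2}{\Lambda\delta}+\tfrac{4M_g^2M^2}{\Lambda}$ and $C_2=\tfrac{2M^2M_I^2}{\Lambda(1-e^{-\delta r})}$ after dividing through by the absorption constant $\Lambda$ (this is exactly where the constant $\Lambda$ in the hypothesis enters). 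Writing $v(t)=e^{\delta t}\mathbf{E}\|z(t)\|_H^2$, this is precisely the hypothesis of Lemma~\ref{lemp} with $C$ replaced by the finite initial data at some $t_0$, $v(\tau)\equiv C_1$ (constant), and $\beta_i\equiv C_2$; Gr\"onwall--Bellman with impulses then gives
\begin{equation*}
e^{\delta t}\,\mathbf{E}\|z(t)\|_H^2\le C\prod_{t_0<t_i<t}(1+C_2)\,e^{C_1(t-t_0)}.
\end{equation*}
Since consecutive impulse points are at least $\gamma$ apart, the number of $t_i$ in $(t_0,t)$ is at most $(t-t_0)/\gamma$, so $\prod_{t_0<t_i<t}(1+C_2)\le e^{\frac{(t-t_0)}{\gamma}\ln(1+C_2)}$. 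Collecting exponents yields
\begin{equation*}
\mathbf{E}\|z(t)\|_H^2\le C\,e^{-t_0(\tfrac1\gamma\ln(1+C_2)+C_1)}\exp\!\Big[\big(\tfrac1\gamma\ln(1+C_2)+C_1-\delta\big)(t-t_0)\Big],
\end{equation*}
and the assumed inequality $\tfrac1\gamma\ln\!\big(1+\tfrac{2M^2M_I^2}{\Lambda(1-e^{-\delta r})}\big)+\tfrac{4M_f^2M^2}{\Lambda\delta}+\tfrac{4M_g^2M^2}{\Lambda}-\delta<0$ makes the exponent negative, giving exponential decay of $\|x(t)-y(t)\|_{L_2(P,H)}$. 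Combined with the existence/uniqueness from Corollary~\ref{cor}, this is exponential stability of the almost periodic solution.

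The main obstacle is the bookkeeping of the constants, in particular pinning down exactly what $\Lambda$ must be so that the $G$-terms are absorbed and what survives is the clean constant-coefficient impulsive Gr\"onwall inequality matching Lemma~\ref{lemp}; one has to be careful that $\|z_s\|_{\mathcal B}^2=\sup_{u\le s}\mathbf{E}\|z(u)\|_H^2$ only reintroduces the same quantity $u(s)$ and does not create a genuinely non-local term that escapes the Gr\"onwall framework, and that the infinite lower limit $-\infty$ is handled by first working on $[\sigma,\infty)$ with representation (\ref{sol}) and then letting $\sigma\to-\infty$ using $\|T(t-\sigma)\|\to0$. Everything else is a routine application of the It\^o isometry, Cauchy--Schwarz, the Lipschitz hypotheses, and Lemma~\ref{lemp}.
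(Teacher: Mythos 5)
Your proposal follows essentially the same route as the paper's own proof: subtract the two mild-solution representations, estimate each term via Cauchy--Schwarz, the It\^o isometry, the Lipschitz hypotheses and the semigroup bounds, absorb the $G$-terms into the left-hand side (which is exactly how the paper produces the constant $\Lambda$), multiply by $e^{\delta t}$, and apply the impulsive Gr\"onwall inequality of Lemma~\ref{lemp} together with the lower bound $\gamma$ on the gaps between impulse times to get the decay rate $\frac{1}{\gamma}\ln(1+C_2)+C_1-\delta<0$. The only cosmetic difference is that the paper works directly with the finite-horizon representation (\ref{sol}) starting at $\sigma$ (so the initial term supplies the constant $C$ in Lemma~\ref{lemp}), which you also correctly flag as the way to handle the lower limit, so there is no gap.
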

\begin{proof}
%%By Corollary\ref{cor}, system \ref{eq1} has a mild square-mean piecewise almost periodic solution $x(t)$.
%
 Let $x(t)=x(t,\sigma,\varphi)$  and $y(t)=y(t,\sigma,\psi)$ be two solutions of equation \ref{eq1}, then
% \begin{eqnarray*}
% x(t)&=&T(t)(\varphi(0)-G(\sigma,\varphi(0),\varphi))+G(t,x(t),x_{t})+\int_{\sigma}^{t}A T(t-s)G(s,x(s),x_{s})\mathrm{d}s\\
%&&+\int_{\sigma}^{t}T(t-s)f(s,x(s),x_{s})\mathrm{d}s+\int_{\sigma}^{t}T(t-s)g(s,x(s),x_{s})\mathrm{d}w(s)\\
%  \nonumber &&+\sum_{\sigma<t_{i}<t}T(t-t_{i})I_{i}(x(t_{i})).
% \end{eqnarray*}
%\begin{eqnarray*}
%% \nonumber to remove numbering (before each equation)
%   y(t)&=&T(t)(\psi(0)-G(\sigma,\psi(0),\psi))+G(t,y(t),y_{t})+\int_{\sigma}^{t}A T(t-s)G(s,y(s),y_{s})\mathrm{d}s\\
%&&+\int_{\sigma}^{t}T(t-s)f(s,y(s),y_{s})\mathrm{d}s+\int_{\sigma}^{t}T(t-s)g(s,y(s),y_{s})\mathrm{d}w(s)\\
%  \nonumber &&+\sum_{\sigma<t_{i}<t}T(t-t_{i})I_{i}(y(t_{i})).
%\end{eqnarray*}
by (\ref{sol}) and the Cauchy-Schwarz inequality, we can derive that
\begin{eqnarray*}
% \nonumber to remove numbering (before each equation)
 &&\mathbf{E}\|x(t)-y(t)\|_{H}^{2} \\
 &\leq&6 \mathbf{E} \|T(t)[(\varphi(0)-\psi(0))-(G(\sigma,\varphi(0),\varphi)-G(\sigma,\psi(0),\psi))]\|_{H}^{2}\\
   &&+6\mathbf{E}\|G(t,x(t),x_{t})-G(t,y(t),y_{t})\|_{H}^{2} +6\mathbf{E}\|\int_{\sigma}^{t}A T(t-s)(G(s,x(s),x_{s})-G(s,y(s),y_{s}))\mathrm{d}s\|_{H}^{2}  \\
   &&+ 6\mathbf{E}\|\int_{\sigma}^{t}T(t-s)[f(s,x(s),x_{s})-f(s,y(s),y_{s})]\mathrm{d}s\|_{H}^{2}\\
   &&+6 \mathbf{E}\|\int_{\sigma}^{t}T(t-s)[g(s,x(s),x_{s})-g(s,y(s),y_{s})]\mathrm{d}w(s)\|_{H}^{2}\\
   &&+6 \mathbf{E}\|\sum_{\sigma<t_{i}<t}T(t-t_{i})[I_{i}(x(t_{i}))-I_{i}(y(t_{i}))]\|_{H}^{2}.
  %&\leq&6 \mathbf{E} \|T(t)[(\varphi(0)-\psi(0))-(G(\sigma,\varphi(0),\varphi)-G(\sigma,\psi(0),\psi))]\|_{H}^{2}\\
%   &&+6\mathbf{E}\|G(t,x(t),x_{t})-G(t,y(t),y_{t})\|_{H}^{2} +6\mathbf{E}\|\int_{\sigma}^{t}A T(t-s)(G(s,x(s),x_{s})-G(s,y(s),y_{s}))\mathrm{d}s\|_{H}^{2}  \\
%   &&+ 6\mathbf{E}\|\int_{\sigma}^{t}T(t-s)[f(s,x(s),x_{s})-f(s,y(s),y_{s})]\mathrm{d}s\|_{H}^{2}\\
%   &&+6\int_{\sigma}^{t} \mathbf{E}\|T(t-s)[g(s,x(s),x_{s})-g(s,y(s),y_{s})]\|_{H}^{2}\mathrm{d}s\\
%   &&+6 \mathbf{E}\|\sum_{\sigma<t_{i}<t}T(t-t_{i})[I_{i}(x(t_{i}))-I_{i}(y(t_{i}))]\|_{H}^{2}.\\
\end{eqnarray*}
We note that
\begin{eqnarray*}
&&\mathbf{E} \|T(t)[(\varphi(0)-\psi(0))-(G(\sigma,\varphi(0),\varphi)-G(\sigma,\psi(0),\psi))]\|_{H}^{2}\\
&\leq& M^{2}e^{-2\delta t}[2\mathbf{E} \|\varphi(0)-\psi(0)\|_{H}^{2}+4M_{G}^{2}\|(-A)^{-\alpha}\|_{H}^{2}\mathbf{E} (\|\varphi(0)-\psi(0)\|^{2}_{H}+\|\varphi-\psi\|_{\mathcal{B}}^{2})\\
&\leq&[2 M^{2}e^{-2\delta t}+8 M^{2}M_{G}^{2}e^{-2\delta t}\|(-A)^{-\alpha}\|_{H}^{2}]\sup_{s\leq 0}\mathbf{E}\|\varphi(s)-\psi(s)\|^{2}_{H},
\end{eqnarray*}
\begin{eqnarray*}
% \nonumber to remove numbering (before each equation)
  &&\mathbf{E}\|G(t,x(t),x_{t})-G(t,y(t),y_{t})\|_{H}^{2} +\mathbf{E}\|\int_{0}^{t}A T(t-s)(G(s,x(s),x_{s})-G(s,y(s),y_{s}))\mathrm{d}s\|_{H}^{2} \\
  &\leq& [\|(-A)^{-\alpha}\|^{2}+\frac{M_{1-\alpha}^{2}\Gamma^{2}(\alpha)}{\delta^{2}}]4 M_{G}^{2}\sup_{s\leq t}\mathbf{E}\|x(t)-y(t)\|_{H}^{2},
\end{eqnarray*}
\begin{eqnarray*}
% \nonumber to remove numbering (before each equation)
 && \mathbf{E}\|\int_{\sigma}^{t}T(t-s)[f(s,x(s),x_{s})-f(s,y(s),y_{s})]\mathrm{d}s\|_{H}^{2}\\&&+\mathbf{E}\int_{\sigma}^{t} \|T(t-s)\|\|[g(s,x(s),x_{s})-g(s,y(s),y_{s})]\|_{\mathcal{L}_{2}^{0}}^{2}\mathrm{d}s  \\
   &\leq&\int_{\sigma}^{t}M^{2}e^{-\delta(t-s)}\mathrm{d}s\int_{0}^{t}e^{-\delta(t-s)}\mathbf{E}\|f(s,x(s),x_{s})-f(s,y(s),y_{s})\|_{H}^{2}\mathrm{d}s\\&&
   +\int_{\sigma}^{t}M^{2}e^{-2\delta(t-s)}\mathbf{E}\|g(s,x(s),x_{s})-g(s,y(s),y_{s})\|_{\mathcal{L}_{2}^{0}}^{2}\mathrm{d}s  \\
 &\leq&[\frac{2M_{f}^{2}M^{2}}{\delta}+4M^{2}M_{g}^{2}]\int_{\sigma}^{t}e^{-\delta(t-s)}\sup_{s\leq t}\mathbf{E}\|x(s)-y(s)\|_{H}^{2}\mathrm{d}s,
\end{eqnarray*}
\begin{eqnarray*}
% \nonumber to remove numbering (before each equation)
 && \mathbf{E}\|\sum_{\sigma<t_{i}<t}T(t-t_{i})[I_{i}(x(t_{i}))-I_{i}(y(t_{i}))]\|_{H}^{2}\\
  &\leq& \mathbf{E}[\sum_{\sigma<t_{i}<t}Me^{-\delta(t-t_{i})}\|I_{i}(x(t_{i}))-I_{i}(y(t_{i}))\|_{H}]^{2} \\
  &\leq&(\sum_{\sigma<t_{i}<t}M^{2}e^{-\delta(t-t_{i})})\sum_{\sigma<t_{i}<t}e^{-\delta(t-t_{i})}\mathbf{E}\|I_{i}(x(t_{i}))-I_{i}(y(t_{i}))\|_{H}^{2}\\
  &\leq&\frac{2M^{2}M_{I}^{2}}{1-e^{-\delta r}}\sum_{\sigma<t_{i}<t}e^{-\delta(t-t_{i})}\mathbf{E}\|x(t_{i})-y(t_{i})\|_{H}^{2},
\end{eqnarray*}
then
\begin{eqnarray*}
% \nonumber to remove numbering (before each equation)
&&[1-[\|(-A)^{-\alpha}\|^{2}+\frac{M_{1-\alpha}^{2}\Gamma^{2}(\alpha)}{\delta^{2}}]24 M_{G}^{2}]\mathbf{E}\|x(t)-y(t)\|_{H}^{2}  \\
 &\leq&[2M^{2}e^{-2\delta t}+4M^{2}M_{G}e^{-2\delta t}\|(-A)^{-\alpha}\|_{H}^{2}]\sup_{\theta\leq0}\mathbf{E}\|\varphi(\theta)-\psi(\theta)\|^{2}_{H} \\
   &&  +[4\frac{M_{f}^{2}M^{2}}{\delta}+4M_{g}^{2}M^{2}]\int_{\sigma}^{t}e^{-\delta(t-s)}\sup_{\theta\leq s}\mathbf{E}\|x(\theta)-y(\theta)\|_{H}^{2}\mathrm{d}s\\&&+\frac{2M^{2}M_{I}^{2}}{1-e^{-\delta r}}\sum_{\sigma<t_{i}<t}e^{-\delta(t-t_{i})}\mathbf{E}\|x(t_{i})-y(t_{i})\|_{H}^{2}.
\end{eqnarray*}
Let $\Lambda=1-[\|(-A)^{-\alpha}\|^{2}+\frac{M_{1-\alpha}^{2}\Gamma^{2}(\alpha)}{\delta^{2}}]24 M_{G}^{2},$
consequently
\begin{eqnarray*}
% \nonumber to remove numbering (before each equation)
  e^{\delta t}\sup_{s\leq t}\mathbf{E}\|x(t)-y(t)\|_{H}^{2} &\leq& \frac{2M^{2}+8M^{2}M_{G}^{2}\|(-A)^{-\alpha}\|_{H}^{2}}{\Lambda}\sup_{\theta\leq0}\mathbf{E}\|\varphi(\theta)-\psi(\theta)\|^{2}_{H} \\
 &&  +[\frac{4M_{f}^{2}M^{2}}{\Lambda\delta}+\frac{4M_{g}^{2}M^{2}}{\Lambda}]\int_{\sigma}^{t}e^{\delta s}\sup_{\theta\leq s}\mathbf{E}\|x(\theta)-y(\theta)\|_{H}^{2}\mathrm{d}s\\&&+\frac{2M^{2}M_{I}^{2}}{\Lambda(1-e^{-\delta\gamma})}\sum_{\sigma<t_{i}<t}e^{\delta t_{i}}\mathbf{E}\|x(t_{i})-y(t_{i})\|_{H}^{2}
\end{eqnarray*}
Let $\gamma (t)= e^{\delta t}\sup_{s\leq t}\mathbf{E}\|x(t)-y(t)\|_{H}^{2},$ then
\begin{eqnarray*}
\gamma (t) &\leq& \frac{2M^{2}+8M^{2}M_{G}^{2}\|(-A)^{-\alpha}\|_{H}^{2}}{\Lambda} \gamma(0)\\
  &&+[\frac{4M_{f}^{2}M^{2}}{\Lambda\delta}+\frac{4M_{g}^{2}M^{2}}{\Lambda}]\int_{\sigma}^{t}\gamma (s)\mathrm{d}s+\frac{2M^{2}M_{I}^{2}}{\Lambda(1-e^{-\delta r})}\sum_{\sigma<t_{i}<t}\gamma (t_{i}).
\end{eqnarray*}
Hence by Lemma\ref{lemp}, we can show that
\begin{eqnarray*}
% \nonumber to remove numbering (before each equation)
 \gamma (t) &\leq& \frac{2M^{2}+8M^{2}M_{G}\|(-A)^{-\alpha}\|_{H}^{2}}{\Lambda} \gamma(0)\Pi_{\sigma<t_{i}<t}(1+\frac{2M^{2}M_{I}^{2}}{\Lambda(1-e^{-\delta\gamma})})e^{\int_{\sigma}^{t}(\frac{4M_{f}^{2}M^{2}}{\Lambda\delta}+\frac{4M_{g}^{2}M^{2}}{\Lambda})\mathrm{d}s}\\
   &\leq& \frac{2M^{2}+8M^{2}M_{G}\|(-A)^{-\alpha}\|_{H}^{2}}{\Lambda} \gamma(0)(1+\frac{2M^{2}M_{I}^{2}}{\Lambda(1-e^{-\delta r})})^{\frac{t}{\gamma}}e^{(\frac{4M_{f}^{2}M^{2}}{\Lambda\delta}+\frac{4M_{g}^{2}M^{2}}{\Lambda})t} \\ &=&  \frac{2M^{2}+8M^{2}M_{G}\|(-A)^{-\alpha}\|_{H}^{2}}{\Lambda} \gamma(0)e^{[\frac{1}{\gamma}\ln(1+2\frac{M^{2}M_{I}^{2}}{\Lambda(1-e^{-\delta r})})+\frac{4M_{f}M^{2}}{\Lambda\delta}+\frac{4M_{g}^{2}M^{2}}{\Lambda}]t}.
\end{eqnarray*}
that is
\begin{eqnarray*}
 && \sup_{s\leq t}\mathbf{E}\|x(t)-y(t)\|_{H}^{2}\\
 &\leq& \frac{2M^{2}+8M^{2}M_{G}^{2}\|(-A)^{-\alpha}\|_{H}^{2}}{\Lambda} \gamma(0)e^{[\frac{1}{\gamma}\ln(1+\frac{2M^{2}M_{I}^{2}}{\Lambda(1-e^{-\delta r})})+\frac{4M_{f}^{2}M^{2}}{\Lambda\delta}+\frac{4M_{g}^{2}M^{2}}{\Lambda}-\delta]t}.
\end{eqnarray*}
Since $\frac{1}{\gamma}\ln(1+\frac{2M^{2}M_{I}^{2}}{\Lambda(1-e^{-\delta r})})+\frac{4M_{f}^{2}M^{2}}{\Lambda\delta}+\frac{4M_{g}^{2}M^{2}}{\Lambda}-\delta<0,$ the square mean piecewise almost periodic solution of system (\ref{eq1}) is exponentially stable.
\end{proof}

\end{document}